\newtheorem{remark}[theorem]{Remark}
\begin{document}

\title{Bounds for Invariance Pressure}
\author{Fritz Colonius\\Institut f\"{u}r Mathematik, Universit\"{a}t Augsburg, Augsburg, Germany
\and Alexandre J. Santana and Jo\~{a}o A. N. Cossich\\Departamento de Matem\'{a}tica, Universidade Estadual de Maring\'{a}\\Maring\'{a}, Brazil}
\maketitle

\begin{center}
\today

\end{center}

\textbf{Abstract: }This paper provides an upper for the invariance pressure of
control sets with nonempty interior and a lower bound for sets with finite
volume. In the special case of the control set of a hyperbolic linear control
system in $\mathbb{R}^{d}$ this yields an explicit formula. Further
applications to linear control systems on Lie groups and to inner control sets
are discussed.

\textbf{Keywords:} Invariance pressure, invariance entropy, control sets

\textbf{AMS\ subject classification. 93C15, 37B40, 94A17}

\section{Introduction}

The notion of invariance pressure generalizes invariance entropy by adding
potentials $f$ on the control range. It has been introduced and analyzed in
Colonius, Cossich, Santana \cite{Cocosa, Cocosa2}. Zhong and Huang \cite{ZH18}
show that invariance pressure can be characterized as a dimension-like notion
within the framework due to Pesin. A basic reference for invariance entropy is
Kawan's monograph \cite{Kawa13}; here also the relation to minimal data rates
is explained which gives the main motivation from applications. Further
references include the seminal paper Nair, Evans, Mareels and Moran
\cite{NEMM04} as well as Colonius and Kawan \cite{ColKa09a} and da Silva and
Kawan \cite{AdrKa}, \cite{daSilK18}. In the latter paper, robustness
properties in the hyperbolic case are proved. Huang and Zhong \cite{HuanZ18}
show that several generalized notions of invariance entropy fit into the
dimension-theoretic framework due to Pesin.

The main results of the present paper are upper and lower bounds for the
invariance pressure of compact subsets $K$ in a control set $D$ with nonvoid
interior and compact closure. For hyperbolic linear control systems in
$\mathbb{R}^{d}$ this yield a formula for the invariance pressure. We also
give applications for inner control sets and for certain linear systems on Lie
groups. Invariance entropy of these systems has been analyzed by da Silva
\cite{daSil14}.

Section \ref{Section2} collects results on linearization of control systems
and on the notion of invariance pressure. Upper and lower bounds for
invariance pressure are given in Sections \ref{Section3} and \ref{Section4},
respectively. Section \ref{Section5} presents a formula for the invariance
pressure of linear control systems in $\mathbb{R}^{d}$ and Section
\ref{Section6} discusses applications linear systems on Lie groups and for
inner control sets.

\section{Preliminaries\label{Section2}}

In this section we first recall basic notions for control systems on manifolds
and their linearization \ Then the concepts of invariance pressure and outer
invariance pressure are presented as well as some of their properties.

\subsection{Control systems and linearization}

Throughout the paper, $M$ will denote a smooth manifold, that is, a connected,
second-countable, topological Hausdorff manifold endowed with a $C^{\infty}$
differentiable structure. A continuous-time \textbf{\textit{control system}}
on a smooth manifold $M$ is a family of ordinary differential equations
\begin{equation}
\ \dot{x}(t)=F(x(t),\omega(t)),\omega\in\mathcal{U},\label{2.1}%
\end{equation}
on $M$ which is parametrized by measurable functions $\omega:\mathbb{R}%
\rightarrow\mathbb{R}^{m}$, $\omega(t)\in U\subset\mathbb{R}^{m}$ almost
everywhere, called \textbf{\textit{controls}} forming the set $\mathcal{U}$ of
\textbf{\textit{admissible control functions}}, where $U\subset\mathbb{R}^{m}$
is a compact set, the \textbf{\textit{control range}}. The function
$F:M\times\mathbb{R}^{m}\rightarrow TM$ is a $C^{1}$-map such that for each
$u\in U$, $F_{u}(\cdot):=F(\cdot,u)$ is a smooth vector field on $M$. For each
$x\in M$ and $\omega\in\mathcal{U}$, we suppose that there exists an unique
solution $\varphi(t,x,\omega)$ which is defined for all $t\in\mathbb{R}$. We
usually refer to the solution $\varphi(\cdot,x,\omega)$ as a
\textbf{\textit{trajectory}} of $x$ with control function $\omega$ and write
$\varphi_{t}(x,\omega)=\varphi(t,x,\omega)$ where convenient.

We need several notions characterizing controllability properties of subsets
of the state space $M$ of system (\ref{2.1}).

For $x\in M$ and $t>0$, the \textbf{\textit{set of points reachable from}} $x$
\textbf{\textit{up to time}} $t$ and the \textbf{\textit{set of points
controllable to}} $x$ \textbf{\textit{within time}} $t$ are given by
\[
\mathcal{O}_{\leq t}^{+}(x):=\{y\in M;\ \mbox{ there are }s\in\lbrack
0,t]\mbox{ and }\omega\in\mathcal{U}\mbox{ with }\varphi(s,x,\omega)=y\},
\]
and
\[
\mathcal{O}_{\leq t}^{-}(x):=\{y\in M;\ \mbox{ there are }s\in\lbrack
0,t]\mbox{ and }\omega\in\mathcal{U}\mbox{ with }\varphi(s,y,\omega)=x\},
\]
respectively. The \textbf{\textit{positive}} and \textbf{\textit{negative
orbit from}} $x\in M$ are%
\[
\mathcal{O}^{+}(x):=\bigcup_{t>0}\mathcal{O}_{\leq t}^{+}(x)\text{ and
}\mathcal{O}^{-}(x):=\bigcup_{t>0}\mathcal{O}_{t}^{-}(x).
\]

A key concept of this paper is presented in the following definition.

\begin{definition}
A subset $D$ of $M$ is a \textbf{\textit{control set}} if

(i) for each $x\in D$, there exists $\omega\in\mathcal{U}$ with $\varphi
(\mathbb{R}_{+},x,\omega)\subset D$ (controlled invariance);

(ii) for each $x\in D$ one has $D\subset\overline{\mathcal{O}^{+}(x)}$
(approximate controllability);

(iii) $D$ is maximal with these properties.
\end{definition}

If for all $t>0$ the sets $\mathcal{O}_{\leq t}^{-}(x)$ and $\mathcal{O}_{\leq
t}^{+}(x)$ have nonempty interior, we say that system (\ref{2.1}) is
\textbf{\textit{locally accessible}} \textbf{\textit{from}} $x\in M$. Of main
interest are control sets with nonvoid interior which are locally accessible
from all $x\in\mbox{int}D$. Then $\mathrm{int}D\subset\mathcal{O}^{+}(x)$ for
all $x\in D$, cf. Colonius and Kliemann \cite[Lemma 3.2.13]{ColK00}.

Next we recall some basic concepts and results on linearization of a control system.

\begin{definition}
For a control-trajectory pair $(\omega(\cdot),\varphi(\cdot,x,\omega))$ the
linearized system is given by%
\begin{equation}
\frac{Dz}{dt}(t)=A(t)z(t)+B(t)\mu(t),\ \mu\in L^{\infty}(\mathbb{R}%
,\mathbb{R}^{m}), \label{linearization}%
\end{equation}
where $A(t):=\nabla F_{\omega(t)}(\varphi(t,x,\omega
))\ \mbox{ and }\ B(t):=D_{2}F(\varphi(t,x,\omega),\omega(t))$.
\end{definition}

The derivative on the left-hand side of (\ref{linearization}) is the covariant
derivative of $z(\cdot)$ along $\varphi(\cdot,x,\omega)$ and $D_{2}$ is the
derivative with respect to second component. A solution of
(\ref{linearization}) corresponding to $\mu\in L^{\infty}(\mathbb{R}%
,\mathbb{R}^{m})$ with initial value $\lambda\in T_{x}M$ is a locally
absolutely continuous vector field $z=\phi^{x,\omega}(\cdot,\lambda
,\mu):\mathbb{R}\rightarrow TM$ along $\varphi(\cdot,x,\omega)$ with
$z(0)=\lambda$, satisfying the differential equation (\ref{linearization}) for
almost all $t\in\mathbb{R}$.

The next proposition presents some properties of linearized systems.

\begin{proposition}
\label{p1} Let $(\omega(\cdot),\varphi(\cdot,x,\omega))$ be a
control-trajectory pair with corresponding linearization (\ref{linearization}%
). Then the following statements hold:

(i) For all $\tau>0$ the mapping $\varphi_{\tau}:M\times L^{\infty}%
([0,\tau],\mathbb{R}^{m})\rightarrow M,(x,\omega)\mapsto\varphi(\tau
,x,\omega)$ is continuously (Fr\'{e}chet) differentiable.

(ii) For every initial value $\lambda\in T_{x}M$ and every $\mu\in L^{\infty
}(\mathbb{R},\mathbb{R}^{m})$ there exists a unique solution $\phi^{x,\omega
}(\cdot,\lambda,\mu):\mathbb{R}\rightarrow TM$ of (\ref{linearization})
satisfying
\begin{equation}
\phi^{x,\omega}(0,\lambda,\mu)=\lambda,\phi^{x,\omega}(t,\lambda,\mu
)=D\varphi_{t}(x,\omega)(\lambda,\mu),t\in\mathbb{R}, \label{eq}%
\end{equation}
for $(\lambda,\mu)\in T_{x}M\times L^{\infty}(\mathbb{R},\mathbb{R}^{m})$,
where $D$ stands for the total derivative of $\varphi_{t}:M\times L^{\infty
}(\mathbb{R},\mathbb{R}^{m})\rightarrow M$ which consists of the derivative
$d_{x}\varphi_{t}(\cdot,\omega):T_{x}M\rightarrow T_{\varphi(t,x,\omega)}M$ in
the first, and the Fr\'{e}chet derivative of $\varphi_{t}(x,\cdot):L^{\infty
}(\mathbb{R},\mathbb{R}^{m})\rightarrow T_{\varphi(t,x,\omega)}M$ in the
second component.

(iii) For every $\tau>0$ the map $\phi^{x,\omega}(\tau,\cdot,\cdot
):T_{x}M\times L^{\infty}([0,\tau],\mathbb{R}^{m})\rightarrow T_{\varphi
(\tau,x,\omega)}M$ is linear and continuous.

(iv) For each $t\in\mathbb{R}$ abbreviate $\phi_{t}^{x,\omega}:=\phi
^{\varphi(t,x,\omega),\omega(t+\cdot)}.$ Then for all $t,s\in\mathbb{R}$,
$\lambda\in T_{x}M$ and $\mu\in L^{\infty}(\mathbb{R},\mathbb{R}^{m})$,
\[
\phi_{s}^{x,\omega}(t,\phi^{x,\omega}(s,\lambda,\mu),\Theta_{s}\mu
)=\phi^{x,\omega}(t+s,\lambda,\mu),
\]
and, in particular,
\[
\phi_{s}^{x,\omega}(t,\phi^{x,\omega}(s,\lambda,\mathbf{0}),\mathbf{0}%
)=\phi^{x,\omega}(t+s,\lambda,\mathbf{0}).
\]

\end{proposition}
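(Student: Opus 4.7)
The plan is to reduce all four statements to classical Carath\'{e}odory ODE theory via local coordinates. Since $\varphi([0,\tau],x,\omega)$ is compact in $M$, it can be covered by finitely many charts, and on each chart the system becomes a Carath\'{e}odory ODE with $C^{1}$ state--parameter dependence and measurable time-dependence through $\omega$. Uniqueness and continuity of $\varphi$ then let me patch the local differentiability statements into a coordinate-free statement on $M$.

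For (i), I would invoke the standard Fr\'{e}chet differentiability theorem for Carath\'{e}odory systems with $F\in C^{1}$ in $(x,u)$ and $L^{\infty}$ controls (as developed for instance in Sontag's control theory monograph or in \cite{Kawa13}): $\varphi_{\tau}$ is $C^{1}$, with derivative in the first argument $d_{x}\varphi_{\tau}(\cdot,\omega)$ acting on $T_{x}M$ and Fr\'{e}chet derivative in the second argument a bounded linear map on $L^{\infty}([0,\tau],\mathbb{R}^{m})$. For (ii), the equation (\ref{linearization}) is a linear Carath\'{e}odory equation with coefficients $A(\cdot),B(\cdot)$ that are bounded and continuous along the reference trajectory, so it admits a unique global solution $\phi^{x,\omega}(\cdot,\lambda,\mu)$ for every $(\lambda,\mu)\in T_{x}M\times L^{\infty}(\mathbb{R},\mathbb{R}^{m})$. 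To identify this solution with $D\varphi_{t}(x,\omega)(\lambda,\mu)$, I would differentiate the integral form of (\ref{2.1}) in $(x,\omega)$ using (i), interchange differentiation and integration (justified by dominated convergence), and conclude that $t\mapsto D\varphi_{t}(x,\omega)(\lambda,\mu)$ itself solves (\ref{linearization}) with initial value $\lambda$; the claimed identity then follows from uniqueness. Part (iii) is now immediate: linearity of $(\lambda,\mu)\mapsto\phi^{x,\omega}(\tau,\lambda,\mu)$ follows from linearity of (\ref{linearization}) in $(z,\mu)$ combined with uniqueness, and continuity follows from the variation-of-constants formula together with Gronwall's inequality on $[0,\tau]$.

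For (iv), the key observation is a cocycle identity. Both $t\mapsto\phi_{s}^{x,\omega}(t,\phi^{x,\omega}(s,\lambda,\mu),\Theta_{s}\mu)$ and $t\mapsto\phi^{x,\omega}(t+s,\lambda,\mu)$ solve the same linear Carath\'{e}odory equation, the linearization along the trajectory $\varphi(s+\cdot,x,\omega)=\varphi(\cdot,\varphi(s,x,\omega),\omega(s+\cdot))$ driven by $\Theta_{s}\mu$, with identical initial value $\phi^{x,\omega}(s,\lambda,\mu)$ at $t=0$. Uniqueness gives the identity, and the second assertion is simply the special case $\mu=\mathbf{0}$.

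The main obstacle is essentially confined to parts (i)--(ii): one must carefully justify Fr\'{e}chet differentiability with respect to $L^{\infty}$ perturbations assuming only $F\in C^{1}$, and then transfer the local-chart result to a coordinate-free statement on the manifold. This is a standard but delicate point and is the only place where genuine analytic work, as opposed to bookkeeping with uniqueness and linearity, is required.
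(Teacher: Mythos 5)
The paper itself offers no proof of Proposition \ref{p1}: it is recalled as a standard preliminary, essentially Kawan \cite[Proposition 1.19--1.21]{Kawa13}, so there is no in-paper argument to compare against. Your outline is the standard route to that result and is sound in structure: chart-by-chart reduction to Carath\'{e}odory theory for (i)--(ii), linearity plus Gronwall for (iii), and uniqueness of solutions for the cocycle identity in (iv) are exactly the right ingredients, and you correctly isolate the Fr\'{e}chet differentiability with respect to $L^{\infty}$ perturbations as the one genuinely analytic step. Two points deserve correction before this could stand as a proof. First, the coefficients $A(t)=\nabla F_{\omega(t)}(\varphi(t,x,\omega))$ and $B(t)=D_{2}F(\varphi(t,x,\omega),\omega(t))$ are \emph{not} continuous in $t$: $\omega$ is merely measurable, so $A(\cdot),B(\cdot)$ are only measurable and essentially bounded on compact intervals (boundedness coming from compactness of $U$ and of $\varphi([0,\tau],x,\omega)$, plus $F\in C^{1}$). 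That is all Carath\'{e}odory theory needs, but the claim of continuity as stated is false. Second, the reduction to charts must reconcile the covariant formulation $\frac{Dz}{dt}=A(t)z+B(t)\mu$ with the ordinary linear ODE you obtain by differentiating the coordinate representation of (\ref{2.1}): in coordinates the covariant derivative picks up Christoffel terms, and $\nabla F_{\omega(t)}$ differs from the coordinate Jacobian by exactly those terms, so the two systems agree---but this identification is the step that makes the coordinate-free statement on $TM$ legitimate, and your sketch passes over it silently. With those two repairs the argument is the standard one found in the cited literature.
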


The next definition introduces the notion of regularity of a
control-trajectory pair.

\begin{definition}
Consider some $(x,\omega,\tau)\in M\times\mathcal{U}\times(0,\infty)$ and let
$y:=\varphi(\tau,x,\omega)$. Then we call the linearization along
$(\omega(\cdot),\varphi(\cdot,x,\omega))$ \textbf{\textit{controllable on}}
$[0,\tau]$ if for each $\lambda_{1}\in T_{x}M$ and $\lambda_{2}\in T_{y}M$
there exists $\mu\in L^{\infty}([0,\tau],\mathbb{R}^{m})$ with
\[
\phi^{x,\omega}(\tau,\lambda_{1},\mu)=\lambda_{2}.
\]
In this case, we say that the control-trajectory pair $(\omega(\cdot
),\varphi(\cdot,x,\omega))$ is \textbf{\textit{regular}} on $[0,\tau]$.
\end{definition}

A control-trajectory pair $(\omega(\cdot),\varphi(\cdot,x,\omega))$ is called
$\tau$-periodic, $\tau\geq0$, if $(\varphi(t+\tau,x,\omega),\omega
(t+\tau))=(\varphi(t,x,\omega),\omega(t))$ for all $t\in\mathbb{R}$, or
equivalently if $\varphi(\tau,x,\omega)=x$ and $\Theta_{\tau}\omega=\omega$,
where $(\Theta_{\tau}\omega)(t)=\omega(t+\tau),t\in\mathbb{R}$, is the $\tau
$-shift on $\mathcal{U}$. A periodic regular control-trajectory pair enjoys
the property described in the following proposition (cf. \cite[Proposition
1.30]{Kawa13}).

\begin{proposition}
\label{p2} Let $(\omega(\cdot),\varphi(\cdot,x,\omega))$ be a $\tau$-periodic
control-trajectory pair which is regular on $[0,\tau]$. Then there exists
$C>0$ such that for every $\lambda\in T_{x}M$ there is $\mu\in L^{\infty
}([0,\tau],\mathbb{R}^{m})$ with $\phi^{x,\omega}(\tau,\lambda,\mu)=0_{x}$ and
$\Vert\mu\Vert_{\lbrack0,\tau]}\leq C|\lambda|$, where $\Vert\cdot
\Vert_{\lbrack0,\tau]}$ denotes the $L^{\infty}$-norm.
\end{proposition}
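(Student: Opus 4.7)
The plan is to exploit the linearity of the time-$\tau$ map asserted in Proposition \ref{p1}(iii), the fact that $\tau$-periodicity identifies the tangent spaces $T_x M$ and $T_{\varphi(\tau,x,\omega)}M$, and the fact that the target $T_xM$ is finite-dimensional. Concretely, I would decompose the linear continuous operator $\phi^{x,\omega}(\tau,\cdot,\cdot): T_xM\times L^\infty([0,\tau],\mathbb{R}^m)\to T_xM$ as
\[
\phi^{x,\omega}(\tau,\lambda,\mu)=A\lambda+T\mu,\qquad A\lambda:=\phi^{x,\omega}(\tau,\lambda,\mathbf{0}),\quad T\mu:=\phi^{x,\omega}(\tau,0_x,\mu),
\]
so that finding $\mu$ with $\phi^{x,\omega}(\tau,\lambda,\mu)=0_x$ is equivalent to solving $T\mu=-A\lambda$.

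Next I would use the regularity hypothesis: by definition, for any prescribed final value in $T_x M$ there exists an admissible $\mu$ steering $0_x$ to that value, which is exactly the statement that $T:L^\infty([0,\tau],\mathbb{R}^m)\to T_xM$ is surjective. Since $T_xM$ is finite-dimensional (of dimension $d=\dim M$), I would construct a bounded linear right inverse $S:T_xM\to L^\infty([0,\tau],\mathbb{R}^m)$ of $T$ by choosing a basis $e_1,\ldots,e_d$ of $T_xM$, selecting preimages $\mu_i\in T^{-1}(e_i)$, and setting $S\bigl(\sum_i c_i e_i\bigr):=\sum_i c_i\mu_i$. This $S$ is automatically bounded because it is a linear map out of a finite-dimensional normed space; equivalently, one may invoke the open mapping theorem for the surjection $T$ onto a finite-dimensional Banach space.

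Finally I would define
\[
\mu:=-S(A\lambda)\in L^\infty([0,\tau],\mathbb{R}^m),
\]
which by construction satisfies $T\mu=-A\lambda$ and hence $\phi^{x,\omega}(\tau,\lambda,\mu)=A\lambda+T\mu=0_x$. The norm estimate follows from
\[
\Vert\mu\Vert_{[0,\tau]}\le \Vert S\Vert\,\Vert A\lambda\Vert\le \Vert S\Vert\,\Vert A\Vert\,|\lambda|,
\]
so the constant $C:=\Vert S\Vert\,\Vert A\Vert$ depends only on the periodic pair $(\omega,\varphi(\cdot,x,\omega))$, as required. The only mildly delicate point is the construction of the bounded right inverse $S$, but this is routine in finite-dimensional codomains; the $\tau$-periodicity is used silently at the start to ensure $A$ and $T$ both map into the same space $T_xM$, which is what allows the equation $T\mu=-A\lambda$ to even make sense.
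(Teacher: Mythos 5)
Your proof is correct. The paper itself gives no proof of this proposition, only the citation to Kawan's monograph, and your argument (splitting the time-$\tau$ map as $A\lambda+T\mu$, using regularity to get surjectivity of $T$ onto the finite-dimensional space $T_xM$, and building a bounded linear right inverse from preimages of a basis) is precisely the standard argument behind that reference, with the norm bound $C=\Vert S\Vert\,\Vert A\Vert$ coming out exactly as required.
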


For a $\tau$-periodic control-trajectory pair $(\omega(\cdot),\varphi
(\cdot,x,\omega))$ the Floquet or Lyapunov exponents are given by%
\begin{equation}
\lim_{t\rightarrow\infty}\frac{1}{t}\log\left\Vert \phi^{x,\omega}%
(t,\lambda,\mathbf{0})\right\Vert =\lim_{n\rightarrow\infty}\frac{1}{n\tau
}\log\left\Vert \phi^{x,\omega}(n\tau,\lambda,\mathbf{0})\right\Vert
,\lambda\in T_{x}M.\label{exponents}%
\end{equation}
These limits exist and the Lyapunov exponents are denoted by $\rho_{1}%
(\omega,x),\dotsc,\rho_{r}(\omega,x)$ with $1\leq r:=r(\omega,x)\leq d=\dim
M$. The Lyapunov spaces are given by%
\[
L_{j}(\omega,x)=\left\{  \lambda\in T_{x}M;~\lim_{t\rightarrow\pm\infty}%
\frac{1}{t}\log\left\Vert \phi^{x,\omega}(t,\lambda,\mathbf{0})\right\Vert
=\rho_{j}(\omega,x)\right\}  ,j=1,\dotsc,r,
\]
with dimensions $d_{j}(\omega,x)$. They yield the decomposition%
\[
T_{x_{0}}M=L_{1}(\omega,x)\oplus\cdots\oplus L_{r}(\omega,x).
\]

\subsection{Invariance pressure}

In this subsection we recall the concepts of invariance and outer invariance
pressure introduced in Colonius, Cossich and Santana \cite{Cocosa, Cocosa2}
and some of their properties.

A pair $(K,Q)$ of nonempty subsets of $M$ is called
\textbf{\textit{admissible}} if $K$ is compact and for each $x\in K$ there
exists $\omega\in\mathcal{U}$ such that $\varphi(\mathbb{R}_{+},x,\omega
)\subset Q$. For an admissible pair $(K,Q)$ and $\tau>0$, a $(\tau
,K,Q)$-\textbf{\textit{spanning set}} $\mathcal{S}$ is a subset of
$\mathcal{U}$ such that for all $x\in K$ there is $\omega\in\mathcal{S}$ with
$\varphi(t,x,\omega)\in Q$ for all $t\in\lbrack0,\tau]$. Denote by
$C(U,\mathbb{R})$ the set of continuous function $f:U\rightarrow\mathbb{R}$
which we call \textbf{\textit{potentials}}.

For a potential $f\in C(U,\mathbb{R})$ denote $(S_{\tau}f)(\omega):=\int
_{0}^{\tau}f(\omega(t))dt$ and
\[
a_{\tau}(f,K,Q):=\inf\left\{  \sum_{\omega\in\mathcal{S}}e^{(S_{\tau}%
f)(\omega)};\ \mathcal{S}\text{ }(\tau,K,Q)\text{-spanning}\right\}  .
\]
The \textbf{\textit{invariance pressure}} $P_{inv}(f,K,Q)$ of control system
(\ref{2.1}) is defined by%
\[
P_{inv}(f,K,Q):=\underset{\tau\rightarrow\infty}{\lim\sup}\frac{1}{\tau}\log
a_{\tau}(f,K,Q).
\]
Given an admissible pair $(K,Q)$ such that $Q$ is closed in $M$, and a metric
$\varrho$ on $M$ which is compatible with the Riemannian structure, we define
the \textbf{\textit{outer invariance pressure}} of $(K,Q)$ by%
\[
P_{out}(f,K,Q):=\lim_{\varepsilon\rightarrow0}P_{inv}(f,K,N_{\varepsilon
}(Q)),
\]
where $N_{\varepsilon}(Q)=\{y\in M;\ \exists\ x\in Q\mbox{ with }\varrho
(x,y)<\varepsilon\}$ denotes the $\varepsilon$-neighborhood of $Q$.

Note that $P_{out}(f,K,Q)\leq P_{inv}(f,K,Q)\leq\infty$ for every admissible
pair $(K,Q)$ and all potentials $f$. For the potential $f=\mathbf{0}$, this
reduces to the notion of invariance entropy, $P_{inv}(\mathbf{0}%
,K,Q)=h_{inv}(K,Q)$ and $P_{out}(\mathbf{0},K,Q)=h_{out}(K,Q)$, cf. Kawan
\cite{Kawa13}.

The next proposition presents some properties of the function $P_{inv}%
(\cdot,K,Q):C(U,\mathbb{R})\rightarrow\mathbb{R}$, cf. \cite[Proposition
3.4]{Cocosa2}.

\begin{proposition}
\label{propert}The following assertions hold for an admissible pair $(K,Q)$,
functions $f,g\in C(U,\mathbb{R})$ and $c\in\mathbb{R}$:

(i) $P_{inv}(f,K,Q)\leq P_{inv}(g,K,Q)$ and $P_{out}(f,K,Q)\leq P_{out}%
(g,K,Q)$ for $f\leq g$.

(ii) $P_{inv}(f+c,K,Q)=P_{inv}(f,K,Q)+c$.

(iii) $h_{inv}(K,Q)+\min_{u\in U}f(u)\leq P_{inv}(f,K,Q)\leq h_{inv}%
(K,Q)+\max_{u\in U}f(u).$
\end{proposition}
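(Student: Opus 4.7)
The plan is to prove all three assertions by unpacking the definitions of $a_{\tau}(f,K,Q)$ and passing from pointwise inequalities on the integrated potential $S_\tau f$ to inequalities on the exponential sums, then to $a_\tau$, and finally to $P_{inv}$ via $\limsup$. All three items are monotonicity/scaling statements that do not require manipulation of the spanning set structure beyond taking infima, so no new combinatorial input is needed.

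For (i), if $f\le g$ pointwise on $U$, then for every $\omega\in\mathcal{U}$ one has $(S_\tau f)(\omega)=\int_0^\tau f(\omega(t))\,dt\le\int_0^\tau g(\omega(t))\,dt=(S_\tau g)(\omega)$, hence $e^{(S_\tau f)(\omega)}\le e^{(S_\tau g)(\omega)}$. Summing over any $(\tau,K,Q)$-spanning set $\mathcal{S}$ and taking the infimum yields $a_\tau(f,K,Q)\le a_\tau(g,K,Q)$; the conclusion $P_{inv}(f,K,Q)\le P_{inv}(g,K,Q)$ follows by applying $\frac{1}{\tau}\log(\cdot)$ and $\limsup_{\tau\to\infty}$. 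The statement for $P_{out}$ then follows by applying the same inequality with $Q$ replaced by each $N_\varepsilon(Q)$ and passing to the limit $\varepsilon\to 0$.

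For (ii), note that $(S_\tau(f+c))(\omega)=(S_\tau f)(\omega)+c\tau$, so $e^{(S_\tau(f+c))(\omega)}=e^{c\tau}\,e^{(S_\tau f)(\omega)}$. Since this multiplicative factor does not depend on $\omega$, it factors out of the sum and the infimum, giving $a_\tau(f+c,K,Q)=e^{c\tau}\,a_\tau(f,K,Q)$. Taking $\frac{1}{\tau}\log(\cdot)$ and $\limsup_{\tau\to\infty}$ yields the identity $P_{inv}(f+c,K,Q)=P_{inv}(f,K,Q)+c$.

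For (iii), write $m:=\min_{u\in U}f(u)$ and $M:=\max_{u\in U}f(u)$, so that $m\tau\le(S_\tau f)(\omega)\le M\tau$ for every $\omega$. Hence for any $(\tau,K,Q)$-spanning set $\mathcal{S}$,
\[
|\mathcal{S}|\,e^{m\tau}\;\le\;\sum_{\omega\in\mathcal{S}}e^{(S_\tau f)(\omega)}\;\le\;|\mathcal{S}|\,e^{M\tau}.
\]
Letting $r_\tau:=\inf\{|\mathcal{S}|:\mathcal{S}\text{ is }(\tau,K,Q)\text{-spanning}\}$, the outer inequalities survive the infimum over $\mathcal{S}$, giving $r_\tau\,e^{m\tau}\le a_\tau(f,K,Q)\le r_\tau\,e^{M\tau}$. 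Applying $\frac{1}{\tau}\log(\cdot)$ and $\limsup_{\tau\to\infty}$, and recalling that $h_{inv}(K,Q)=\limsup_{\tau\to\infty}\frac{1}{\tau}\log r_\tau$, yields the two-sided bound in (iii). The only point requiring attention, but not genuinely an obstacle, is that the infimum in $a_\tau(f,K,Q)$ and in $r_\tau$ may a priori be achieved by different spanning sets; this is resolved by the fact that the bracketing inequality above holds for every single $\mathcal{S}$ and the factors $e^{m\tau},e^{M\tau}$ are independent of $\mathcal{S}$, so taking infima separately on each side is legitimate.
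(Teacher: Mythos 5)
Your proof is correct, and it is the standard definitional argument: the paper itself gives no proof of this proposition but cites \cite[Proposition 3.4]{Cocosa2}, where the same unpacking of $a_{\tau}$ via pointwise bounds on $S_{\tau}f$ is used. The one delicate point — that the bracketing inequalities hold for every spanning set $\mathcal{S}$ before taking infima — is exactly the point you address, so nothing is missing.
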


\begin{remark}
\label{Remark_countable}If there is no countable $(\tau,K,Q)$-spanning set,
then $a_{\tau}(f,K,Q)=\infty$ (see Kawan \cite[Example 2.3]{Kawa13} for an
example). In particular, if $P_{inv}(f,K,Q)<\infty$, then for every $\tau>0$
there are countable $(\tau,K,Q)$-spanning sets. On the other hand if for all
$\tau>0$ there is a countable $(\tau,K,Q)$-spanning set, $a_{\tau
}(f,K,Q)=\infty$ is also possible. Proposition \ref{propert} (iii) shows that
$P_{inv}(f,K,Q)<\infty$ if and only if $h_{inv}(f,K,Q)<\infty$. If every
$(\tau,K,Q)$-spanning set $\mathcal{S}$ contains a finite $(\tau
,K,Q)$-spanning subset $\mathcal{S}^{\prime}$, then
\[
a_{\tau}(f,K,Q)=\inf\left\{  \sum_{\omega\in\mathcal{S}}e^{(S_{\tau}%
f)(\omega)};\ \mathcal{S}\text{ finite and }(\tau,K,Q)\text{-spanning}%
\right\}  .
\]
This follows, since all summands satisfy $e^{(S_{\tau}f)(\omega)}>0$, and
hence the summands in $\mathcal{S}\setminus\mathcal{S}^{\prime}$ can be
omitted. This situation occurs e.g. if $Q$ is open, where compactness of $K$
may be used. For the outer invariance entropy one considers $(\tau
,K,N_{\varepsilon}(Q))$-spanning sets, $\varepsilon>0$, and hence here it is
also sufficient to consider finite $(\tau,K,N_{\varepsilon}(Q))$-spanning
sets. For the inner invariance pressure of discrete time systems, one
considers sets which are $(\tau,K,\mathrm{int}Q)$-spanning. Here again finite
spanning sets are sufficient (the proof given in \cite[Propositon 5]{Cocosa}
for the case $K=Q$ easily extends to admissible pairs $(K,Q)$).
\end{remark}

\begin{remark}
The Lipschitz continuity property
\[
\left\vert P_{inv}(f,K,Q)-P_{inv}(g,K,Q)\right\vert \leq\left\Vert
f-g\right\Vert _{\infty}\text{ for }f,g\in C(U,\mathbb{R}),
\]
holds if $P_{inv}(f,K,Q),P_{inv}(g,K,Q)<\infty$. In fact, in this case, there
are for every $\tau>0$ countable $(\tau,K,Q)$-spanning sets $\mathcal{S}$ with
$\sum_{\omega\in\mathcal{S}}e^{(S_{\tau}f)(\omega)}<\infty$. Then the
arguments in \cite[Proposition 13(iii)]{Cocosa} can be applied in this
situation observing that the elementary lemma \cite[Lemma 12]{Cocosa}, on
which the proof is based, is valid not only for finite but also for infinite
sequences: Let $a_{i}\geq0,b_{i}>0,i\in\mathbb{N}$. Then for all
$n\in\mathbb{N}$%
\[
\frac{\sum_{i=1}^{n}a_{i}}{\sum_{i=1}^{n}b_{i}}\geq\min_{i=1,...n}\frac{a_{i}%
}{b_{i}}\geq\inf_{i\in\mathbb{N}}\frac{a_{i}}{b_{i}},
\]
and one may take the limit for $n\rightarrow\infty$.
\end{remark}

The following proposition shows that in the definition of invariance pressure
we can take the limit superior over times which are integer multiples of some
fixed time step $\tau>0$.

\begin{proposition}
\label{discretiza}The invariance pressure satisfies for every $\tau>0$%
\begin{equation}
P_{inv}(f,K,Q)=\limsup_{n\rightarrow\infty}\frac{1}{n\tau}\log a_{n\tau
}(f,K,Q)\text{ for all }f\in C(U,\mathbb{R}). \label{4.1b}%
\end{equation}

\end{proposition}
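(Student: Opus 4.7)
The plan is to prove the two inequalities separately. Writing $L := P_{inv}(f,K,Q)$ and $L_\tau$ for the right-hand side of (\ref{4.1b}), the bound $L_\tau \le L$ is immediate from the definition of $\limsup$, since $\{n\tau : n \in \mathbb{N}\}$ is a subsequence of the parameter set $(0,\infty)$. All the content lies in proving the reverse direction $L \le L_\tau$.

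First I would normalize by replacing $f$ with $f - \min_{u \in U} f(u)$; Proposition \ref{propert}(ii) ensures both $L$ and $L_\tau$ shift by the same constant, so the identity for $f$ is equivalent to the identity under the assumption $f \ge 0$. This normalization guarantees $(S_t f)(\omega) \ge 0$ for every admissible $\omega$ and every $t$, and hence $\log a_t(f,K,Q) \ge 0$ whenever a $(t,K,Q)$-spanning set exists. For $t > 0$ write $t = n\tau + s$ with $n := \lfloor t/\tau \rfloor$ and $s \in [0,\tau)$. Any $((n+1)\tau,K,Q)$-spanning set $\mathcal{S}$ is automatically $(t,K,Q)$-spanning because $t \le (n+1)\tau$, and since $f \ge 0$ one has $(S_t f)(\omega) \le (S_{(n+1)\tau} f)(\omega)$ for every $\omega \in \mathcal{S}$. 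Taking the infimum over $\mathcal{S}$ yields the comparison
\[
a_t(f,K,Q)\ \le\ a_{(n+1)\tau}(f,K,Q).
\]

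To finish, pick $t_k \to \infty$ with $\frac{1}{t_k} \log a_{t_k}(f,K,Q) \to L$ and write $t_k = n_k \tau + s_k$ with $s_k \in [0,\tau)$, so that $n_k \to \infty$ and $(n_k+1)\tau / t_k \to 1$. The comparison above yields
\[
\frac{1}{t_k}\log a_{t_k}(f,K,Q)\ \le\ \frac{(n_k+1)\tau}{t_k}\cdot\frac{1}{(n_k+1)\tau}\log a_{(n_k+1)\tau}(f,K,Q).
\]
If $L_\tau = \infty$ the claim is trivial; otherwise the right-hand factor is non-negative and has $\limsup \le L_\tau$, so since $(n_k+1)\tau/t_k \to 1$ the entire right-hand side has $\limsup$ at most $L_\tau$, giving $L \le L_\tau$.

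The only delicate point I anticipate is keeping track of signs when passing to the $\limsup$: multiplying a quantity that could be unbounded below or negative by a prefactor converging to $1$ can perturb a $\limsup$ in either direction. This is precisely why the normalization to $f \ge 0$ is performed at the start, as it forces $\log a_{(n_k+1)\tau}(f,K,Q) \ge 0$ and makes the final passage to the limit routine.
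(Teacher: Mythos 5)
Your proposal is correct and follows essentially the same route as the paper: normalize to $f-\min f\ge 0$ so that $a_{t}(f,K,Q)\le a_{(n+1)\tau}(f,K,Q)$ and the logarithms are nonnegative, then absorb the ratio of times (the paper uses the factor $\tfrac{n_k+1}{n_k}$ where you use $\tfrac{(n_k+1)\tau}{t_k}$, which is the same device). The only cosmetic point is that Proposition \ref{propert}(ii) covers the shift of $P_{inv}$ but not of the right-hand side, for which one notes directly that $a_{n\tau}(f+c,K,Q)=e^{cn\tau}a_{n\tau}(f,K,Q)$.
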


\begin{proof}
Let $(\tau_{k})_{k\geq1}$, $\tau_{k}\in(0,\infty)$ and $\tau_{k}%
\rightarrow\infty$. Then for every $k\geq1$ there exists $n_{k}\geq1$ such
that $n_{k}\tau\leq\tau_{k}\leq(n_{k}+1)\tau$ and $n_{k}\rightarrow\infty$ for
$k\rightarrow\infty$. Since $\tilde{f}(u):=f(u)-\inf f\geq0,u\in U$, it
follows that $a_{\tau_{k}}(\tilde{f},K,Q)\leq a_{(n_{k}+1)\tau}(\tilde
{f},K,Q)$ and consequently $\frac{1}{\tau_{k}}\log a_{\tau_{k}}(\tilde
{f},K,Q)$ is less than or equal to $\frac{1}{n_{k}\tau}\log a_{(n_{k}+1)\tau
}(\tilde{f},K,Q)$. Hence
\begin{align*}
\limsup_{k\rightarrow\infty}\frac{1}{\tau_{k}}\log a_{\tau_{k}}(\tilde
{f},K,Q)  &  \leq\limsup_{k\rightarrow\infty}\frac{1}{n_{k}\tau}\log
a_{(n_{k}+1)\tau}(\tilde{f},K,Q)\\
&  =\limsup_{k\rightarrow\infty}\frac{n_{k}+1}{n_{k}}\frac{1}{(n_{k}+1)\tau
}\log a_{(n_{k}+1)\tau}(\tilde{f},K,Q)\\
&  \leq\limsup_{n\rightarrow\infty}\frac{1}{n\tau}\log a_{n\tau}(\tilde
{f},K,Q).
\end{align*}
This shows that $P_{inv}(f-\inf f,K,Q)\leq\limsup_{n\rightarrow\infty}\frac
{1}{n\tau}\log a_{n\tau}(f-\inf f,K,Q)$, and by Proposition \ref{propert}
(iii) we obtain
\[
P_{inv}(f,K,Q)\leq\limsup_{n\rightarrow\infty}\frac{1}{n\tau}\log a_{n\tau
}(f,K,Q).
\]
The converse inequality is obvious.
\end{proof}

For the proof of the following proposition see \cite[Corollary 4.3]{Cocosa2}.

\begin{proposition}
\label{compact}Let $K_{1},K_{2}$ be two compact sets with nonempty interior
contained in a control set $D\subset M$. Then $(K_{1},Q)$ and $(K_{2},Q)$ are
admissible pairs and for all $f\in C(U,\mathbb{R})$ we have
\[
P_{inv}(f,K_{1},Q)=P_{inv}(f,K_{2},Q).
\]

\end{proposition}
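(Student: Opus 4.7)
The plan is to prove both directions $P_{inv}(f,K_{2},Q)\le P_{inv}(f,K_{1},Q)$ and its reverse; by symmetry in $K_{1},K_{2}$ only one direction needs work. Admissibility of $(K_{i},Q)$ follows from $K_{i}\subset D$ together with the controlled invariance property of the control set $D$ (under the standing hypothesis, implicit in this kind of statement, that the controlled-invariant trajectories out of $D$ remain in $Q$).

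First I would manufacture finitely many \emph{transition controls} that steer all of $K_{2}$ into $\mathrm{int}\,K_{1}$ in uniformly bounded time while staying in $Q$. Fix $x\in K_{2}\subset D$. Since $\mathrm{int}\,K_{1}\subset \mathrm{int}\,D \subset \mathcal{O}^{+}(x)$ by local accessibility inside the control set, approximate controllability produces $\omega_{x}\in\mathcal{U}$ and $T_{x}>0$ with $\varphi(T_{x},x,\omega_{x})\in\mathrm{int}\,K_{1}$ and $\varphi([0,T_{x}],x,\omega_{x})\subset Q$. Continuous dependence on the initial state (Proposition \ref{p1}(i)) then yields an open neighborhood $V_{x}\ni x$ on which the same $\omega_{x}$ has the same endpoint behavior and keeps the trajectory in $Q$. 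Compactness of $K_{2}$ selects finitely many $V_{1},\dots,V_{n}$, controls $\omega_{1},\dots,\omega_{n}$, and times $T_{1},\dots,T_{n}$. Set $T:=\max_{i}T_{i}$ and extend each $\omega_{i}$ on $[T_{i},T]$ by an admissible control keeping the trajectory inside $\mathrm{int}\,K_{1}\subset Q$, available from the controlled invariance of $D$ (shrinking $V_{i}$ if necessary so that the landing points sit deep enough in $K_{1}$ to make this extension work uniformly over $V_{i}$).

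Next, given any $(\tau,K_{1},Q)$-spanning set $\mathcal{S}$, form
\[
\mathcal{S}':=\{\omega_{i}\ast\omega \mid 1\le i\le n,\ \omega\in\mathcal{S}\},
\]
where $\omega_{i}\ast\omega$ equals $\omega_{i}$ on $[0,T]$ and then $\omega$ (shifted) on $[T,T+\tau]$. Each $x\in K_{2}$ lies in some $V_{i}$, so $\varphi(T,x,\omega_{i})\in K_{1}$ is then handled by some $\omega\in\mathcal{S}$ on the remaining interval, and $\mathcal{S}'$ is $(\tau+T,K_{2},Q)$-spanning. Additivity of $S_{\cdot}f$ under concatenation gives
\[
a_{\tau+T}(f,K_{2},Q)\le\Bigl(\sum_{i=1}^{n}e^{(S_{T}f)(\omega_{i})}\Bigr)\cdot a_{\tau}(f,K_{1},Q)=C\cdot a_{\tau}(f,K_{1},Q),
\]
with $C$ finite and independent of $\tau$.

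Finally, to pass cleanly to the limit, I would replace $f$ by $\tilde f:=f-\inf_{U}f\ge 0$, so that $a_{\tau}(\tilde f,K_{1},Q)\ge 1$ and all logarithms are nonnegative; taking $\log$, dividing by $\tau+T$, and letting $\tau\to\infty$ (using $\tfrac{\tau}{\tau+T}\to 1$ and Proposition \ref{discretiza}) absorbs the constant $C$ and yields $P_{inv}(\tilde f,K_{2},Q)\le P_{inv}(\tilde f,K_{1},Q)$. Proposition \ref{propert}(ii) transfers this to $f$, and swapping the roles of $K_{1}$ and $K_{2}$ gives equality. The main obstacle I anticipate is making the padded transition controls genuinely keep the trajectories of every $y\in V_{i}$ inside $Q$ throughout $[0,T]$, not only up to $T_{i}$: this is the step that requires shrinking the $V_{i}$ and exploiting that $\mathrm{int}\,K_{1}$ is open, so that the control-set invariance of $K_{1}\subset D$ provides a uniform extension.
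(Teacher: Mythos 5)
Your overall strategy---finitely many transition controls steering $K_{2}$ into $\mathrm{int}\,K_{1}$ while staying in $Q$, concatenated with a $(\tau,K_{1},Q)$-spanning set, then normalizing $f$ to be nonnegative before taking logarithms---is the standard route; the paper itself gives no proof but refers to \cite[Corollary 4.3]{Cocosa2}, which argues along these lines. Two of your steps, however, do not go through as written.

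First, \emph{continuous dependence does not keep the perturbed transition trajectories in $Q$}. For $y\in V_{x}$ with $y\neq x$, continuity only guarantees that $\varphi([0,T_{x}],y,\omega_{x})$ stays close to the reference trajectory, which lies in $D$; since $Q$ is only assumed to contain $D$ and need not contain a neighborhood of it (e.g.\ $Q=\overline{D}$, or $Q=D$ itself), closeness does not give containment. The correct tool is the no-return property of control sets: for $y\in V_{x}\cap K_{2}\subset D$ you already know $\varphi(T_{x},y,\omega_{x})\in\mathrm{int}\,K_{1}\subset D$, and a trajectory segment that starts and ends in a control set remains in it by maximality, whence $\varphi([0,T_{x}],y,\omega_{x})\subset D\subset Q$. (The same fact, not approximate controllability alone, is what puts the reference trajectory from $x$ into $Q$ in the first place.) The conclusion then holds only for $y\in V_{x}\cap K_{2}$, which is all you need.

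Second, the padding on $[T_{i},T]$ ``by an admissible control keeping the trajectory inside $\mathrm{int}\,K_{1}$, available from the controlled invariance of $D$'' fails: controlled invariance of $D$ yields controls keeping trajectories in $D$, not in $K_{1}$, and $K_{1}$ is an arbitrary compact set with no invariance property, so no shrinking of $V_{i}$ guarantees the padded trajectory sits in $K_{1}$ at time $T$; the appended spanning control then has nothing to act on. The clean repair is to drop the padding: concatenate $\omega\in\mathcal{S}$ at the variable time $T_{i}$, note that the resulting control keeps each $y\in V_{i}\cap K_{2}$ in $Q$ on $[0,T_{i}+\tau]\supset[0,\tau+\min_{j}T_{j}]$, so that $\mathcal{S}'$ is $(\tau+\min_{j}T_{j},K_{2},Q)$-spanning, and carry out the Birkhoff-sum estimate with $\tilde{f}\geq 0$ exactly as in your final paragraph. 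With these two repairs the argument is correct.
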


\section{An upper bound on control sets\label{Section3}}

Our goal in this section is to obtain an upper bound for the invariance
pressure of a control set. We consider a smooth control system (\ref{2.1}) on
a Riemannian manifold $(M,g)$ under our standard assumptions.

In the following theorem, given a periodic control-trajectory pair
$(\omega(\cdot),\varphi(\cdot,x,\omega))$, the different Lyapunov exponents at
$(x,\omega)$ are denoted by $\rho_{1}(x,\omega),\dotsc,\rho_{r}(x,\omega
),r=r(x,\omega)$, with Lyapunov spaces of dimensions $d_{1}(x,\omega
),\dotsc,d_{r}(x,\omega)$, respectively.

\begin{theorem}
\label{theo}Let $D\subset M$ be a control set with nonempty interior and
compact closure for control system (\ref{2.1}). Then for every compact set
$K\subset D$ and every set $Q\supset D$, the pair $(K,Q)$ is admissible and
for all potentials $f\in C(U,\mathbb{R})$ the invariance pressure satisfies
\[
P_{inv}(f,K,Q)\leq\inf_{(T,x,\omega)}\left\{  \sum_{j=1}^{r(x,\omega)}%
\max\{0,d_{j}(x,\omega)\rho_{j}(x,\omega)\}+\frac{1}{T}\int_{0}^{T}%
f(\omega(s))ds\right\}  ,
\]
where the infimum is taken over all $(T,x,\omega)\in(0,\infty)\times
\mathrm{int}D\times\mathcal{U}$ such that the control-trajectory pair
$(\omega(\cdot),\varphi(\cdot,x,\omega))$ is $T$-periodic and regular and the
values $\omega(t),t\in\lbrack0,T]$, are in a compact subset of $\mathrm{int}U$.
\end{theorem}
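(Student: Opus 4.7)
The plan is to build, for each admissible periodic regular triple $(T,x,\omega)$ from the infimum set, a $(nT,K,Q)$-spanning family whose cardinality grows at rate $\sum_j\max\{0,d_j(x,\omega)\rho_j(x,\omega)\}$ and whose individual weights $e^{(S_{nT}f)(\widetilde\omega)}$ are close to $e^{n\int_0^T f(\omega(s))\,ds}$, and then take the limit via Proposition \ref{discretiza}. Using Proposition \ref{compact}, one may replace $K$ by a small compact neighborhood $\overline{B_\delta(x)}\subset\mathrm{int}D$, since the bound and the periodic pair are attached to the interior point $x$.

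First I would choose coordinates on $T_x M$ adapted to the Lyapunov decomposition $T_x M=L_1\oplus\cdots\oplus L_r$ and work in a chart around $x$ via the exponential map. For a sufficiently small $\rho>0$ cover the initial ball $B_\rho(0_x)\subset T_x M$ by a grid of cells that are $e^{-\rho_j T}$-fine in each expanding direction $L_j$ (those with $\rho_j>0$) and of fixed size in the non-expanding directions. The number of cells is of order $\prod_{\rho_j>0} e^{d_j\rho_j T}=\exp\bigl(T\sum_j\max\{0,d_j\rho_j\}\bigr)$. For each cell centered at $\lambda_0$, Proposition \ref{p2} supplies a correction $\mu_{\lambda_0}\in L^\infty([0,T],\mathbb R^m)$ with $\phi^{x,\omega}(T,\lambda_0,\mu_{\lambda_0})=0_x$ and $\|\mu_{\lambda_0}\|_\infty\le C|\lambda_0|\le C\rho$. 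Since the values of $\omega$ lie in a compact subset of $\mathrm{int}U$, one can choose $\rho$ small enough that $\omega+\mu_{\lambda_0}\in\mathcal U$ and $\omega+\mu_{\lambda_0}$ still takes values in a compact subset of $\mathrm{int}U$; this is where compactness-in-interior in the hypothesis enters.

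Next I would pass from the linearization to the nonlinear system. By Proposition \ref{p1}(i), the time-$T$ map $\varphi_T$ is Fréchet differentiable, so choosing $\rho$ small and invoking a standard implicit-function / shadowing argument, every initial condition $y=\exp_x(\lambda_0+\xi)$ with $\xi$ in the $\lambda_0$-cell can be steered by $\omega+\mu_{\lambda_0}$ back to a point $\varphi(T,y,\omega+\mu_{\lambda_0})$ lying in a uniformly small neighborhood of $x$, with the whole trajectory remaining in $D\subset Q$ (using that $D$ has nonempty interior and $x\in\mathrm{int}D$). Iterating this construction over $n$ periods and concatenating the correction controls produces a $(nT,K,Q)$-spanning set of controls $\widetilde\omega$ of cardinality at most $\exp\bigl(nT\sum_j\max\{0,d_j\rho_j\}+o(n)\bigr)$, each with $(S_{nT}f)(\widetilde\omega)\le n\int_0^T f(\omega(s))\,ds+n\,\varepsilon(\rho)$, where $\varepsilon(\rho)\to 0$ as $\rho\to 0$ by uniform continuity of $f$ on a compact subset of $U$.

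Combining these bounds gives
\[
a_{nT}(f,K,Q)\le \exp\!\Bigl(nT\textstyle\sum_j\max\{0,d_j\rho_j\}+n\!\int_0^T\!f(\omega(s))\,ds+n\,\varepsilon(\rho)\Bigr),
\]
and dividing by $nT$, sending $n\to\infty$ via Proposition \ref{discretiza}, and finally letting $\rho\to 0$ yields the bound for the chosen $(T,x,\omega)$; taking the infimum over all admissible triples closes the proof. The main obstacle is the third step: the iteration must produce genuine nonlinear trajectories that stay in $Q\supset D$ throughout each period while maintaining the linearized spanning property, which requires careful control of the linearization error along the periodic orbit and uniform smallness of the grid (independent of $n$) so that the per-period perturbation cost $\varepsilon(\rho)$ does not accumulate.
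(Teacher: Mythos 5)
Your proposal is correct and follows essentially the same route as the paper: the paper simply cites Kawan's Theorem 4.4 for the construction of the $(n\tau,K,Q)$-spanning sets $\mathcal{S}_n$ with $\frac{1}{n\tau}\log\#\mathcal{S}_n\leq S_0+\varepsilon$ and controls uniformly $L^\infty$-close to the periodic $\omega_0$ (your grid-plus-Proposition~\ref{p2} sketch is precisely that construction, modulo the detail that one must work over a multiple $\tau=kT$ of the period so the asymptotic Lyapunov rates become effective), and then adds exactly your argument: uniform continuity of $f$ on $U$ turns the closeness $\Vert\omega-\omega_0\Vert<\delta$ into $|S_{n\tau}f(\omega)-S_{n\tau}f(\omega_0)|\leq n\tau\varepsilon$, after which Proposition~\ref{discretiza} and periodicity of $\omega_0$ give the bound.
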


\begin{remark}
For $f\equiv0$, the statement of the theorem reduces to Kawan \cite[Theorem
4.4]{Kawa11b},%
\[
h_{inv}(K,Q)=P_{inv}(\mathbf{0},K,Q)\leq\inf_{(T,x,\omega)}\left\{  \sum
_{j=1}^{r(x,\omega)}\max\{0,d_{j}(x,\omega)\rho_{j}(x,\omega)\}\right\}  .
\]

\end{remark}

\begin{proof}
The theorem will follow by inspection of the proof given in \cite[Theorem
4.4]{Kawa11b} for invariance entropy and by indicating the complementary
arguments needed for invariance pressure. Note first that by Proposition
\ref{compact} we can choose $K$ as an arbitrary compact subset of $D$ with
nonvoid interior. Let $(\omega_{0}(\cdot),\varphi(\cdot,x_{0},\omega_{0}))$ be
a $T$-periodic and regular control-trajectory pair as in the statement of the
theorem. Then fix real numbers $\varepsilon>0$ and
\[
S_{0}>\sum\limits_{j=1}^{r}\max(0,d_{j}\rho_{j})),
\]
where $d_{j}=d_{j}(x_{0},\omega_{0})$ and $\rho_{j}(x_{0},\omega
_{0}),j=1,\dotsc,r$. An ingenious and lengthy construction provides a compact
set $K=\mathrm{cl}(B_{b_{0}}(x_{0}))\subset D$ containing $x_{0}$ in the
interior with the following properties: For some $\tau=kT,k\in\mathbb{N}$, and
arbitrary $n\in\mathbb{N}$ one finds a set $\mathcal{S}_{n}$ of $(n\tau
,K,Q)$-spanning controls $\omega\in\mathcal{S}_{n}$ satisfying
\begin{equation}
\Vert\omega-\omega_{0}\Vert_{\lbrack0,n\tau]}\leq Cb_{0}\sqrt{d}%
,\label{Kawan4.17}%
\end{equation}
where $C>0$ is a constant and $b_{0}>0$ can be taken arbitrarily small (see
\cite[formula (4.17)]{Kawa11b}: the elements of $\mathcal{S}_{n}$ are $n$-fold
concatenations of the controls denoted there by $u_{x}$). The cardinality
$\#\mathcal{S}_{n}$ of $\mathcal{S}_{n}$ is bounded by
\begin{equation}
\frac{1}{n\tau}\log\#\mathcal{S}_{n}\leq S_{0}+\varepsilon,\label{Kawan_p745}%
\end{equation}
cf. \cite[estimate on middle of p. 745]{Kawa11b}.

In order to get a bound for the invariance pressure we need the following
additional arguments: Let $f\in C(U,\mathbb{R})$ be a potential. Since $f$ is
defined on the compact set $U$, its uniform continuity implies that there
exists $\delta>0$ such that $\left\Vert u-v\right\Vert <\delta$ implies
$|f(u)-f(v)|<\varepsilon$. Take $b_{0}>0$ small enough such that%
\[
Cb_{0}\sqrt{d}<\delta.
\]
By (\ref{Kawan4.17}) every $\omega\in\mathcal{S}_{n}$ satisfies $|\omega
(t)-\omega_{0}(t)|\leq\Vert\omega-\omega_{0}\Vert_{\lbrack0,n\tau]}<\delta$
for almost all $t\in\lbrack0,n\tau]$. Hence it follows that $|f(\omega
(t))-f(\omega_{0}(t))|<\varepsilon$ for almost all $t\in\lbrack0,n\tau]$.

Now we can estimate
\begin{align*}
\allowdisplaybreaks\frac{1}{n\tau}\log a_{n\tau}(f,K,Q) &  \leq\frac{1}{n\tau
}\log\sum_{\omega\in\mathcal{S}_{n}}e^{(S_{n\tau}f)(\omega)}=\frac{1}{n\tau
}\log\sum_{\omega\in\mathcal{S}_{n}}e^{\int_{0}^{n\tau}f(\omega(t))dt}\\
&  =\frac{1}{n\tau}\log\sum_{\omega\in\mathcal{S}_{n}}e^{\int_{0}^{n\tau
}f(\omega_{0}(t))dt+\int_{0}^{n\tau}[f(\omega(t))-f(\omega_{0}(t))]dt}\\
&  \leq\frac{1}{n\tau}\log\left[  \sum_{\omega\in\mathcal{S}_{n}}e^{\int
_{0}^{n\tau}f(\omega_{0}(t))dt}+\log e^{\int_{0}^{n\tau}\varepsilon
dt}\right]  \\
&  =\frac{1}{n\tau}\log\left(  \#\mathcal{S}_{n}e^{\int_{0}^{n\tau}%
f(\omega_{0}(t))dt}\right)  +\frac{1}{n\tau}\log e^{\int_{0}^{n\tau
}\varepsilon dt}\\
&  =\frac{1}{n\tau}\log\#\mathcal{S}_{n}+\frac{1}{n\tau}\int_{0}^{n\tau
}f(\omega_{0}(t))dt+\varepsilon\\
&  <S_{0}+\frac{1}{T}\int_{0}^{T}f(\omega_{0}(t))dt+2\varepsilon.
\end{align*}
For the last inequality we have used (\ref{Kawan_p745}) and $T$-periodicity of
$\omega_{0}$. By Proposition \ref{discretiza} this implies%
\[
P_{inv}(f,K,Q)=\underset{n\rightarrow\infty}{\lim\sup}\frac{1}{n\tau}\log
a_{n\tau}(f,K,Q)\leq S_{0}+\frac{1}{T}\int_{0}^{T}f(\omega_{0}%
(t))dt+2\varepsilon.
\]
Since $\varepsilon>0$ can be chosen arbitrarily small and $S_{0}$ arbitrarily
close to $\sum\limits_{j=1}^{r}\max(0,d_{j}\rho_{j})$, the assertion of the
theorem follows.
\end{proof}

\begin{remark}
In Kawan \cite[Section 5.2]{Kawa13} and da Silva and Kawan \cite[Section
3.2]{AdrKa} one finds more information on regular periodic control-trajectory pairs.
\end{remark}

\section{A lower bound\label{Section4}}

Again we consider a smooth control system (\ref{2.1}) on a Riemannian manifold
$(M,g)$ under our standard assumptions. Thus for each $t\geq0$ and each
control $\omega\in\mathcal{U}$ the map $\varphi_{t,\omega}:M\rightarrow M$ is
a diffeomorphism.

\begin{theorem}
\label{theorem_lower}Let $(K,Q)$ be an admissible pair where both $K$ and $Q$
have positive and finite volume. Then for every $f\in C(U,\mathbb{R})$%
\begin{align*}
&  P_{inv}(f,K,Q)\\
&  \geq\underset{\tau\rightarrow\infty}{\mathrm{lim~sup}}\frac{1}{\tau}\left(
\inf_{(x,\omega)}\int_{0}^{\tau}f(\omega(s))ds+\max\{0,\inf_{(x,\omega)}%
\int_{0}^{\tau}\mathrm{div}F_{\omega(s)}(\varphi(s,x,\omega))ds\}\right)  ,
\end{align*}
where both infima are taken over all $(x,\omega)\in K\times\mathcal{U}$ with
$\varphi([0,\tau],x,\omega)\subset Q$.
\end{theorem}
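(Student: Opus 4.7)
The plan is a volume-comparison argument via Liouville's formula, of the same flavor as Kawan's entropy lower bound, adapted to handle the potential $f$. First I would reduce to the case in which countable $(\tau,K,Q)$-spanning sets exist: otherwise $a_\tau(f,K,Q)=\infty$ by Remark \ref{Remark_countable} and the bound is trivial. Fix such a spanning set $\mathcal{S}$ and, for each $\omega\in\mathcal{S}$, set
\[
K_\omega := \{x\in K : \varphi(s,x,\omega)\in Q \text{ for all } s\in[0,\tau]\}.
\]
The spanning property gives $K=\bigcup_{\omega\in\mathcal{S}}K_\omega$, hence $\mathrm{vol}(K)\leq\sum_{\omega}\mathrm{vol}(K_\omega)$.

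The next step is Liouville's formula for the time-dependent vector field $F_{\omega(\cdot)}$:
\[
|\det d_x\varphi(\tau,x,\omega)| = \exp\!\Bigl(\int_0^\tau \mathrm{div}F_{\omega(s)}(\varphi(s,x,\omega))\,ds\Bigr) =: e^{\Delta_\tau(x,\omega)}.
\]
Since $\varphi(\tau,K_\omega,\omega)\subset Q$, the change-of-variables formula and monotonicity of $\exp$ yield
\[
\mathrm{vol}(Q)\geq \int_{K_\omega}e^{\Delta_\tau(x,\omega)}\,dx\geq \mathrm{vol}(K_\omega)\,\exp\!\bigl(\inf_{x\in K_\omega}\Delta_\tau(x,\omega)\bigr).
\]
For every $x\in K_\omega$ the pair $(x,\omega)$ is feasible in the sense of the theorem, so the inner infimum dominates $\Delta_\tau^\ast := \inf \int_0^\tau \mathrm{div}F_{\omega(s)}(\varphi(s,x,\omega))\,ds$ over all $(x,\omega)\in K\times\mathcal{U}$ with $\varphi([0,\tau],x,\omega)\subset Q$. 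Summing over $\omega\in\mathcal{S}$ produces $\#\mathcal{S}\geq (\mathrm{vol}(K)/\mathrm{vol}(Q))\,e^{\Delta_\tau^\ast}$, which I combine with the trivial $\#\mathcal{S}\geq 1$.

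To bring in the potential, I observe that whenever $K_\omega\neq\emptyset$ one has $(S_\tau f)(\omega)\geq F_\tau^\ast := \inf \int_0^\tau f(\omega(s))\,ds$ (same feasibility set), so
\[
\sum_{\omega\in\mathcal{S}}e^{(S_\tau f)(\omega)}\geq \max\bigl\{1,\,(\mathrm{vol}(K)/\mathrm{vol}(Q))\,e^{\Delta_\tau^\ast}\bigr\}\cdot e^{F_\tau^\ast}.
\]
Infimizing over $\mathcal{S}$, taking logarithms, dividing by $\tau$, and passing to $\limsup_{\tau\to\infty}$ delivers the stated inequality, provided one observes that $\max\{0,\cdot\}$ is $1$-Lipschitz so that the $\tau$-independent additive constant $\log(\mathrm{vol}(K)/\mathrm{vol}(Q))$ inside the max is absorbed in the limit.

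The step that will need the most care is the bookkeeping around the $\max\{0,\cdot\}$ in the conclusion: combining the divergence bound with the trivial $\#\mathcal{S}\geq 1$ at the right moment and then verifying that the fixed additive volume constant inside the max indeed disappears in the $\limsup$. Measurability of $K_\omega$ (it is closed when $Q$ is, Borel in general), justification of Liouville's identity for the time-dependent field $F_{\omega(\cdot)}$ on a Riemannian manifold, and the harmless edge cases $\Delta_\tau^\ast=\pm\infty$ or $F_\tau^\ast=\pm\infty$ are routine.
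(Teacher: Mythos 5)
Your proposal is correct and follows essentially the same route as the paper's proof: the same reduction to countable $(\tau,K,Q)$-spanning sets via Remark \ref{Remark_countable}, the same decomposition $K=\bigcup_{\omega\in\mathcal{S}}K_\omega$, and the same Liouville/change-of-variables volume comparison against $\mathrm{vol}(Q)$. The only cosmetic difference is that you count spanning controls and carry the ratio $\mathrm{vol}(K)/\mathrm{vol}(Q)$ inside the $\max$ (removing it by Lipschitz continuity of $\max\{0,\cdot\}$), whereas the paper bounds the weighted sum $\sum_{\omega}e^{(S_\tau f)(\omega)}\mathrm{vol}(K_\omega)$ and keeps that constant outside; both disappear in the $\limsup$.
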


\begin{proof}
First observe that by Remark \ref{Remark_countable} we may assume that for all
$\tau>0$ there exists a countable $(\tau,K,Q)$-spanning set, since otherwise
$P_{inv}(f,K,Q)=\infty$, and the infimum in $a_{\tau}(f,K,Q)$ my be taken over
all countable $(\tau,K,Q)$-spanning sets $\mathcal{S}$. For each $\omega$ in a
countable $(\tau,K,Q)$-spanning set $\mathcal{S}$ define%
\[
K_{\omega}:=\{x\in K;\varphi([0,\tau],x,\omega)\subset Q\}.
\]
Thus $K=%
{\textstyle\bigcup\nolimits_{\omega\in\mathcal{S}}}
K_{\omega}$. Since $Q$ is Borel measurable, each set $K_{\omega}$ is
measurable as the countable intersection of measurable sets,%
\[
K_{\omega}=K\cap\bigcap_{t\in\lbrack0,\tau]\cap\mathbb{Q}}\varphi_{t,\omega
}^{-1}(Q).
\]
Then%
\begin{align*}
\mathrm{vol}(Q) &  \geq\mathrm{vol}(\varphi_{t,\omega}(K_{\omega}%
))=\int_{\varphi_{t,\omega}(K_{\omega})}\mathrm{dvol}=\int_{K_{\omega}%
}\left\vert \det\mathrm{d}_{x}\varphi_{t,\omega}\right\vert \mathrm{dvol}\\
&  \geq\mathrm{vol}(K_{\omega})\inf_{(x,\omega)}\left\vert \det\mathrm{d}%
_{x}\varphi_{t,\omega}\right\vert ,
\end{align*}
where the infimum is taken over all $(x,\omega)\in K\times\mathcal{U}$ with
$\varphi([0,\tau],x,\omega)\subset Q$. Abbreviating with the same infima
\[
\alpha(\tau):=\inf_{(x,\omega)}\left\vert \det\mathrm{d}_{x}\varphi_{t,\omega
}\right\vert ,~\beta(\tau):=\inf_{(x,\omega)}S_{\tau}(f)(\omega),
\]
we find%
\begin{align*}
e^{\beta(\tau)}\mathrm{vol}(K) &  \leq\sum_{\omega\in\mathcal{S}}e^{(S_{\tau
}f)(\omega)}\mathrm{vol}(K_{\omega})\leq\sup_{\omega\in\mathcal{S}%
}\mathrm{vol}(K_{\omega})\sum_{\omega\in\mathcal{S}}e^{(S_{\tau}f)(\omega)}\\
&  \leq\frac{\mathrm{vol}(Q)}{\max\{1,\alpha(\tau)\}}\sum_{\omega
\in\mathcal{S}}e^{(S_{\tau}f)(\omega)}.
\end{align*}
Since this holds for every countable $(\tau,K,Q)$-spanning set $\mathcal{S}$,
we find%
\begin{align*}
a_{\tau}(f,K,Q) &  =\inf\left\{  \sum_{\omega\in\mathcal{S}}e^{(S_{\tau
}f)(\omega)};\ \mathcal{S}\text{ countable }(\tau,K,Q)\text{-spanning}%
\right\}  \\
&  \geq\frac{\mathrm{vol}(K)}{\mathrm{vol}(Q)}e^{\beta(\tau)}\max
\{1,\alpha(\tau)\},
\end{align*}
implying%
\begin{align*}
P_{inv}(f,K,Q) &  =\underset{\tau\rightarrow\infty}{\mathrm{lim~sup}}\frac
{1}{\tau}\log a_{\tau}(f,K,Q)\geq\underset{\tau\rightarrow\infty
}{\mathrm{lim~sup}}\frac{1}{\tau}\left(  \beta(\tau)+\log\max\{1,\alpha
(\tau)\}\right)  \\
&  =\underset{\tau\rightarrow\infty}{\mathrm{lim~sup}}\frac{1}{\tau}\left(
\beta(\tau)+\max\{0,\log\alpha(\tau)\}\right)  .
\end{align*}
By Liouville's formula%
\begin{equation}
\log\det\mathrm{d}_{x}\varphi_{t,\omega}=\int_{0}^{\tau}\mathrm{div}%
F_{\omega(s)}(\varphi(s,x,\omega))ds,\label{Liouville}%
\end{equation}
and hence the assertion of the theorem follows:%
\begin{align*}
&  P_{inv}(f,K,Q)\\
&  \underset{\tau\rightarrow\infty}{\geq\mathrm{lim~sup}}\frac{1}{\tau}\left(
\inf_{(x,\omega)}\int_{0}^{\tau}f(\omega(s))ds+\max\{0,\inf_{(x,\omega)}%
\int_{0}^{\tau}\mathrm{div}F_{\omega(s)}(\varphi(s,x,\omega))ds\}\right)  .
\end{align*}

\end{proof}

\section{Linear control systems\label{Section5}}

In this section we prove a formula for the invariance pressure of linear
control systems in $\mathbb{R}^{d}$. They have the form
\begin{equation}
\dot{x}(t)=Ax(t)+B\omega(t),\ \omega\in\mathcal{U}, \label{lcs2}%
\end{equation}
where $A\in\mathbb{R}^{d\times d}$ and $B\in\mathbb{R}^{d\times m}$.

For system (\ref{lcs2}) there exists a unique control set $D$ with nonvoid
interior, if, without control constraint, the system is controllable and the
control range $U$ is a compact neighborhood of the origin in $\mathbb{R}^{m}$.
It is convex, and it is bounded if and only if $A$ is hyperbolic, i.e., there
is no eigenvalue of $A$ with vanishing imaginary part (cf. Hinrichsen and
Pritchard \cite[Theorems 6.2.22 and 6.2.23]{HiP18}, Colonius and Kliemann
\cite[Example 3.2.16]{ColK00}). The state space $\mathbb{R}^{d}$ can be
decomposed into the direct sum of the stable subspace $E^{s}$ and the unstable
subspace $E^{u}$ which are the direct sums of all generalized real eigenspaces
for the eigenvalues $\lambda$ with $\operatorname{Re}\lambda<0$ and
$\operatorname{Re}\lambda>0$, resp. Let $\pi:\mathbb{R}^{d}\rightarrow E^{u}$
be the projection along $E^{s}$. We obtain the following estimates.

\begin{lemma}
\label{theorem20}Consider a linear control system in $\mathbb{R}^{d}$ of the
form (\ref{lcs2}) and assume that the pair $(A,B)$ is controllable, that $A$
is hyperbolic and the control range $U$ is a compact neighborhood of the
origin. Let $D$ be the unique control set with nonvoid interior. Then for
every compact set $K\subset D$ with nonempty interior every potential $f\in
C(U,\mathbb{R})$ satisfies
\begin{align*}
&  \inf_{(T^{\prime},x^{\prime},\omega^{\prime})}\frac{1}{T^{\prime}}\int
_{0}^{T^{\prime}}f(\omega^{\prime}(s))ds\leq P_{inv}(f,K,D)-\sum_{j=1}%
^{r}d_{j}\max\{0,\operatorname{Re}\lambda_{j}\}\\
&  \leq\inf_{(T,x,\omega)}\frac{1}{T}\int_{0}^{T}f(\omega(s))ds,
\end{align*}
where the first infimum is taken over all $(T^{\prime},x^{\prime}%
,\omega^{\prime})\in\mathbb{R}_{+}\times\pi K\times\mathcal{U}$ with
$\pi\varphi([0,T^{\prime}],x^{\prime},\omega^{\prime})\allowbreak\subset\pi D$
and the second infimum is taken over all $(T,x,\omega)\in\lbrack
0,\infty)\times D\times\mathcal{U}$ such that the control-trajectory pair
$(\omega(\cdot),\varphi(\cdot,x,\omega))$ is $T$-periodic and contained in
$\mathrm{int}D$ and the values $\omega(t),t\in\lbrack0,\tau]$, are in a
compact subset of $\mathrm{int}U$.
\end{lemma}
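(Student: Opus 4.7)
The plan is to derive both inequalities by specializing the general bounds Theorem \ref{theo} and Theorem \ref{theorem_lower} to the linear system (\ref{lcs2}), exploiting hyperbolicity to pass to the unstable subspace for the lower bound.

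\textbf{Upper bound.} Along every control-trajectory pair of (\ref{lcs2}) the linearization (\ref{linearization}) reduces to the autonomous equation $\dot z = Az$, because $\nabla F_{u} \equiv A$ and the fibers of $T\mathbb{R}^d$ are canonically $\mathbb{R}^d$. Hence the Lyapunov exponents (\ref{exponents}) coincide with the real parts $\operatorname{Re}\lambda_j$ of the distinct eigenvalues of $A$, and the $d_j$ are the real dimensions of the associated generalized eigenspaces; in particular neither the exponents nor the dimensions depend on $(x,\omega)$. Since $d_j > 0$, we have $\max\{0, d_j \operatorname{Re}\lambda_j\} = d_j \max\{0, \operatorname{Re}\lambda_j\}$, so Theorem \ref{theo} applied with $Q = D$ yields the constant $\sum_{j=1}^{r} d_j \max\{0, \operatorname{Re}\lambda_j\}$ plus the infimum over $(T, x, \omega)$ of the time-average of $f\circ\omega$. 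The admissible index set in the statement of the lemma is contained in the one in Theorem \ref{theo}, so its infimum is only larger, and the upper bound follows a fortiori.

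\textbf{Lower bound via projection to $E^u$.} Because $E^s$ and $E^u$ are $A$-invariant, the projection $\pi$ commutes with the propagator $e^{At}$, so $y(t) := \pi\varphi(t, x, \omega)$ solves the reduced linear control system
$$\dot y = A^u y + B^u \omega, \qquad A^u := A|_{E^u},\ B^u := \pi B,$$
on $E^u$, with flow $\bar\varphi(t, \pi x, \omega) = \pi\varphi(t, x, \omega)$. If $\mathcal{S} \subset \mathcal{U}$ is $(\tau, K, D)$-spanning for (\ref{lcs2}), it is also $(\tau, \pi K, \pi D)$-spanning for the reduced system, since any $\omega\in\mathcal{S}$ keeping $\varphi([0,\tau], x, \omega) \subset D$ keeps its $\pi$-image in $\pi D$; and the costs $e^{(S_\tau f)(\omega)}$ are preserved by the projection since they depend only on $\omega$. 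Therefore $a_\tau(f, K, D) \geq a_\tau^u(f, \pi K, \pi D)$ and so $P_{inv}(f, K, D) \geq P_{inv}^u(f, \pi K, \pi D)$.

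\textbf{Applying Theorem \ref{theorem_lower} to the reduced system.} Here $\pi K$ is compact with nonempty interior in $E^u$ (a linear surjection maps interior to interior), and $\pi D$ is convex, bounded (since $D$ is bounded by hyperbolicity) and has nonempty interior, so both have positive finite volume. The reduced drift $F^u_{u}(y) = A^u y + B^u u$ has constant divergence $\operatorname{tr} A^u = \sum_{j=1}^{r} d_j \max\{0, \operatorname{Re}\lambda_j\} \geq 0$; the inner $\max$ in Theorem \ref{theorem_lower} therefore simplifies and yields
$$P_{inv}^u(f, \pi K, \pi D) \geq \sum_{j=1}^{r} d_j \max\{0, \operatorname{Re}\lambda_j\} + \limsup_{\tau \to \infty} \frac{1}{\tau} \inf_{(x', \omega')} \int_0^\tau f(\omega'(s))\, ds,$$
with the infimum over the admissibility set in $\pi K \times \mathcal{U}$. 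Since $\limsup_\tau h(\tau) \geq \inf_{T' > 0} h(T')$ for any real-valued $h$, the $\limsup$ is bounded below by the first infimum in the statement of the lemma, completing the lower bound.

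\textbf{Main obstacle.} The substantive step is the projection argument for the lower bound: one must verify that $(\pi K, \pi D)$ inherits Borel measurability, admissibility, and positive finite volume, and that spanning sets and costs transport cleanly to the reduced system. Linearity together with $A$-invariance of $E^u$ is precisely what makes the flow intertwine with $\pi$; hyperbolicity of $A$ gives boundedness (hence finite volume) of $\pi D$, and the linear surjection $\pi$ maps the interior of $K$ to a nonempty open set of positive volume in $E^u$. The degenerate case $E^u = \{0\}$, in which the projection collapses, would need a separate and easier argument based on Proposition \ref{propert}(iii).
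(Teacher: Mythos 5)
Your proposal is correct and follows essentially the same route as the paper: the upper bound comes from Theorem \ref{theo} together with the identification of the Lyapunov exponents of a periodic pair of a linear system with the real parts of the eigenvalues of $A$ (the paper invokes Floquet theory for this), and the lower bound comes from projecting onto $E^{u}$ and applying Theorem \ref{theorem_lower} with the divergence term computed as $\operatorname{tr}(A|_{E^{u}})=\sum_{j}d_{j}\max\{0,\operatorname{Re}\lambda_{j}\}$. Your explicit treatment of the degenerate case $E^{u}=\{0\}$ via Proposition \ref{propert}(iii) is a small point the paper leaves implicit.
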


\begin{proof}
The hypotheses imply (see \cite{ColK00}, Example 3.2.16) that $0\in
\mathrm{int}D\subset\mathbb{R}^{d}$ and the Lebesgue measure of $K$ and $D$
(which coincides with the volume) is finite and positive. Theorem \ref{theo}
yields%
\begin{equation}
P_{inv}(f,K,D)\leq\inf_{(T,x,\omega)}\left\{  \sum_{j=1}^{r(x,\omega)}%
\max\{0,d_{j}(x,\omega)\rho_{j}(x,\omega)\}+\frac{1}{T}\int_{0}^{T}%
f(\omega(s))ds\right\}  ,\label{linear_upper}%
\end{equation}
where the infimum is taken over all $T>0$ and all $(x,\omega)\in
\mathrm{int}D\times\mathcal{U}$ such that the control-trajectory pair
$(\omega(\cdot),\varphi(\cdot,x,\omega))$ is $T$-periodic and the values
$\omega(t),t\in\lbrack0,T]$, are in a compact subset of $\mathrm{int}U$. By
Floquet theory it follows (cf. \cite[Proposition 20]{Cocosa2}) that for all
$T$-periodic $(\omega(\cdot),\varphi(\cdot,x,\omega))$%
\[
\sum_{j=1}^{r(x,\omega)}\max\{0,d_{j}(x,\omega)\rho_{j}(x,\omega)\}=\sum
_{j=1}^{r}\max\{0,d_{j}\operatorname{Re}\lambda_{j}\},
\]
where the sum is over the $r$ eigenvalues $\lambda_{j}$ of $A$ with
multiplicities $d_{j}$. Hence
\begin{align*}
P_{inv}(f,K,D) &  \leq\inf_{(T,x,\omega)}\left\{  \sum_{j=1}^{r}\max
\{0,d_{j}\operatorname{Re}\lambda_{j}\}+\frac{1}{T}\int_{0}^{T}f(\omega
(s))ds\right\}  \\
&  \leq\inf_{(T,x,\omega)}\frac{1}{T}\int_{0}^{T}f(\omega(s))ds+\sum_{j=1}%
^{r}d_{j}\max\{0,\operatorname{Re}\lambda_{j}\},
\end{align*}
where the infimum is taken over all $(T,x,\omega)$ as in (\ref{linear_upper}).
This proves second inequality.

Hence it remains to prove the first inequality. By Theorem \ref{theorem_lower}%
\begin{align*}
&  P_{inv}(f,K,D)\\
&  \geq\underset{\tau\rightarrow\infty}{\mathrm{lim~sup}}\frac{1}{\tau}\left(
\inf_{(x,\omega)}\int_{0}^{\tau}f(\omega(s))ds+\max\left\{  0,\inf
_{(x,\omega)}\int_{0}^{\tau}\mathrm{div}F_{\omega(s)}(\varphi(s,x,\omega
))ds\right\}  \right) \\
&  \geq\inf_{(T,x,\omega)}\frac{1}{T}\int_{0}^{T}f(\omega(s))ds+\max\left\{
0,\inf_{(T,x,\omega)}\frac{1}{T}\int_{0}^{T}\mathrm{div}F_{\omega(s)}%
(\varphi(s,x,\omega))ds\right\}  ,
\end{align*}
where both infima in the second line are taken over all pairs $(x,\omega)\in
K\times\mathcal{U}$ with $\varphi([0,\tau],x,\omega)\subset D$ and both infima
in the third line are taken over all $(T,x,\omega)\in(0,\infty)\times
K\times\mathcal{U}$ with $\varphi([0,T],x,\omega)\subset D$. Liouville's
formula (\ref{Liouville}) (or direct inspection) shows
\[
\int_{0}^{\tau}\mathrm{div}F_{\omega(s)}(\varphi(s,x,\omega))ds=\log
\det\mathrm{d}_{x}\varphi_{t,\omega}=\sum_{j=1}^{r}d_{j}\operatorname{Re}%
\lambda_{j},
\]
where the sum is over the $r$ eigenvalues $\lambda_{j}$ of $A$ with
multiplicities $d_{j}$.

\textbf{Step 1:} Suppose that $\operatorname{Re}\lambda_{j}>0$ for all $j$.
Then%
\begin{align*}
&  P_{inv}(f,K,D)\\
&  \geq\inf_{(T,x,\omega)}\frac{1}{T}\int_{0}^{T}f(\omega(s))ds+\max\left\{
0,\inf_{(T,x,\omega)}\frac{1}{T}\int_{0}^{T}\mathrm{div}F_{\omega(s)}%
(\varphi(s,x,\omega))ds\right\} \\
&  =\inf_{(T,x,\omega)}\frac{1}{T}\int_{0}^{T}f(\omega(s))ds+\sum_{j=1}%
^{r}d_{j}\operatorname{Re}\lambda_{j},
\end{align*}
where the infimum is taken over all $(T,x,\omega)\in(0,\infty)\times
K\times\mathcal{U}$ with $\varphi([0,T],x,\omega)\subset D$.

\textbf{Step 2: }Next we treat the general case, where also eigenvalues with
negative real part are allowed. Recall that $\pi:\mathbb{R}^{d}\rightarrow
E^{u}$ denotes the projection onto the unstable subspace $E^{u}$ along the
stable subspace $E^{s}$.

Since these subspaces are $A$-invariant, this defines a semi-conjugacy between
system (\ref{lcs2}) and the system on $E^{u}$ given by%
\begin{equation}
\dot{y}(t)=A_{\left\vert E^{u}\right.  }y(t)+\pi Bu(t),u\in\mathcal{U},
\label{lin_proj}%
\end{equation}
with trajectories $\pi\varphi(\cdot,x^{\prime},\omega^{\prime})$, and the sets
$K$ and $D$ are mapped to $\pi K$ and $\pi D$, resp. Then $\pi K$ and $\pi D$
have positive volume and form an admissible pair (cf. Kawan \cite[proof of
Theorem 3.1]{Kawa13}) . One easily proves that (cf. \cite[Proposition
10]{Cocosa})%
\[
P_{inv}(f,K,Q)\geq P_{inv}(f,\pi K,\pi Q),
\]
since every $(\tau,K,D)$-spanning set yields a $(\tau,\pi K,\pi D)$-spanning
set. Similarly as in Step 1, Theorem \ref{theorem_lower} applied to system
(\ref{lin_proj}) implies that%
\[
P_{inv}(f,\pi K,\pi D)\geq\inf_{(T^{\prime},x^{\prime},\omega^{\prime})}%
\frac{1}{T^{\prime}}\int_{0}^{T^{\prime}}f(\omega^{\prime}(s))ds+\sum
_{j=1}^{r}d_{j}\max\{0,\operatorname{Re}\lambda_{j}\},
\]
where the infimum is taken over all $(T^{\prime},x^{\prime},\omega^{\prime
})\in\mathbb{R}_{+}\times\pi K\times\mathcal{U}$ with $\pi\varphi
([0,T^{\prime}],x^{\prime},\omega^{\prime})\subset\pi D$.
\end{proof}

Next we show that the two infima in the proposition above actually coincide
and again use hyperbolicity of $A$ in a crucial way. This provides the
announced formula for the invariance pressure.

\begin{theorem}
\label{linear_main2}Consider a linear control system in $\mathbb{R}^{d}$ of
the form (\ref{lcs2}) and assume that the pair $(A,B)$ is controllable, the
matrix $A$ is hyperbolic and the control range $U$ is a compact neighborhood
of the origin. Let $D$ be the unique control set with nonvoid interior. Then
for every compact set $K\subset D$ with nonempty interior every potential
$f\in C(U,\mathbb{R})$ satisfies
\begin{equation}
P_{inv}(f,K,D)=\min_{u\in U}f(u)+\sum_{j=1}^{r}d_{j}\max\{0,\operatorname{Re}%
\lambda_{j}\}. \label{conjecture}%
\end{equation}

\end{theorem}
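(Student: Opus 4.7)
The plan is to combine the two-sided bounds of Lemma \ref{theorem20} and to show that both infima appearing there coincide with $\min_{u\in U}f(u)$; this forces
$P_{inv}(f,K,D)-\sum_{j=1}^{r}d_{j}\max\{0,\operatorname{Re}\lambda_{j}\}=\min_{u\in U}f(u)$,
which is exactly (\ref{conjecture}). The first (lower-bound) infimum in that lemma is trivially at least $\min_{u\in U}f(u)$, since the pointwise estimate $f(\omega^{\prime}(s))\geq\min_{u\in U}f(u)$ survives time-averaging over any admissible triple $(T^{\prime},x^{\prime},\omega^{\prime})$; combined with the first inequality of the lemma this already delivers the lower bound on $P_{inv}(f,K,D)$ in (\ref{conjecture}).

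For the matching upper bound I would, given $\varepsilon>0$, pick $u_{\varepsilon}\in\mathrm{int}\,U$ with $f(u_{\varepsilon})<\min_{u\in U}f(u)+\varepsilon$, which is available by continuity of $f$ on the compact set $U$. Hyperbolicity makes $A$ invertible, so the constant control $\omega_{\varepsilon}\equiv u_{\varepsilon}$ admits the equilibrium $x_{\varepsilon}:=-A^{-1}Bu_{\varepsilon}$, and the pair $(\omega_{\varepsilon},\varphi(\cdot,x_{\varepsilon},\omega_{\varepsilon}))$ is $T$-periodic for every $T>0$, with $\omega_{\varepsilon}$ taking values in the compact set $\{u_{\varepsilon}\}\subset\mathrm{int}\,U$. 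Regularity on $[0,T]$ is automatic because along any constant control-trajectory pair of a linear system the linearization is $\dot{z}=Az+B\mu$, whose controllability is precisely the Kalman condition on $(A,B)$.

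The step that carries the real content, and is where hyperbolicity is used crucially, is the containment $x_{\varepsilon}\in\mathrm{int}\,D$. Here I would invoke the standard structural description of the unique control set for a hyperbolic controllable linear system with $U$ a compact neighborhood of the origin (Colonius and Kliemann \cite[Example 3.2.16]{ColK00}): $D$ is bounded, convex, and every equilibrium $-A^{-1}Bu$ with $u\in\mathrm{int}\,U$ lies in $\mathrm{int}\,D$. Without hyperbolicity $D$ need not even be bounded and this containment can fail, which is exactly the reason hyperbolicity is indispensable at this point. Granting the containment, $(T,x_{\varepsilon},\omega_{\varepsilon})$ is admissible for the second infimum of Lemma \ref{theorem20} and contributes the value $f(u_{\varepsilon})<\min_{u\in U}f(u)+\varepsilon$; letting $\varepsilon\downarrow0$ produces the matching upper bound, which combined with the lower bound proves (\ref{conjecture}).
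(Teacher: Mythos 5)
Your proposal is correct and follows the same overall squeeze as the paper: both arguments sandwich $P_{inv}(f,K,D)-\sum_j d_j\max\{0,\operatorname{Re}\lambda_j\}$ between the two infima of Lemma \ref{theorem20} and show each equals $\min_{u\in U}f(u)$. The difference lies in how the witness for the second (upper-bound) infimum is built. The paper takes a general near-optimal control $\omega_1$ with values in a compact subset of $\mathrm{int}\,U$ and manufactures a $T_0$-periodic trajectory through the fixed point $x(T_0,\omega_1)=(I-e^{AT_0})^{-1}\varphi(T_0,0,\omega_1)$, using hyperbolicity to invert $I-e^{AT_0}$, and then verifies $x_1\in\mathrm{int}\,D$ by an explicit local exact-controllability argument (surjectivity of $\omega\mapsto\varphi(T_0,0,\omega)$). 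You instead specialize to a constant control $u_\varepsilon\in\mathrm{int}\,U$ near the minimizer and use the equilibrium $x_\varepsilon=-A^{-1}Bu_\varepsilon$, which only needs invertibility of $A$; this is simpler, and regularity is indeed automatic for linear systems by Kalman's condition. The one step you outsource to a citation --- that $-A^{-1}Bu\in\mathrm{int}\,D$ for every $u\in\mathrm{int}\,U$ --- is true but deserves a line of justification: either run the paper's local-controllability argument at $x_\varepsilon$, or note that translating coordinates by $x_\varepsilon$ turns the system into a controllable linear system with control range $U-u_\varepsilon$, a compact neighborhood of $0$, whose unique control set with nonvoid interior contains $0$ in its interior; uniqueness identifies it with $D-x_\varepsilon$. (Two small caveats, shared with the paper: choosing $u_\varepsilon\in\mathrm{int}\,U$ with $f(u_\varepsilon)<\min_U f+\varepsilon$ implicitly uses that $\mathrm{int}\,U$ is dense in $U$, e.g.\ convexity of $U$; and hyperbolicity is not really what makes the containment $x_\varepsilon\in\mathrm{int}\,D$ work --- it is needed for invertibility of $A$ and, more importantly, for the boundedness and finite volume of $D$ that underlie the lower bound of Lemma \ref{theorem20}.)
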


\begin{proof}
Let $\varepsilon>0$ and consider a control $\omega_{0}\in\mathcal{U}$
satisfying%
\begin{align*}
\frac{1}{T_{0}}\int_{0}^{T_{0}}f(\omega_{0}(s))ds &  \leq\inf_{(T^{\prime
},\omega^{\prime})\in(0,\infty)\times\mathcal{U}}\frac{1}{T^{\prime}}\int
_{0}^{T^{\prime}}f(\omega^{\prime}(s))ds+\varepsilon\\
&  \leq\inf_{(T^{\prime},x^{\prime},\omega^{\prime})}\frac{1}{T^{\prime}}%
\int_{0}^{T^{\prime}}f(\omega^{\prime}(s))ds+\varepsilon,
\end{align*}
where the second infimum is taken over all triples $(T^{\prime},x^{\prime
},\omega^{\prime})\in\mathbb{R}_{+}\times\pi K\times\mathcal{U}$ with
$\pi\varphi([0,T],x^{\prime},\omega^{\prime})\subset\pi D$. Observe that there
is a control value $u_{0}$ with%
\[
f(u_{0})=\min_{u\in U}f(u)=\inf_{(T^{\prime},\omega^{\prime})\in
(0,\infty)\times\mathcal{U}}\frac{1}{T^{\prime}}\int_{0}^{T^{\prime}}%
f(\omega^{\prime}(s))ds.
\]
Then there is a control%
\[
\omega_{1}\in\mathrm{int}\mathcal{U}_{\left\vert [0,T_{0}]\right.  }=\{u\in
L^{\infty}([0,T_{0}];u(t),t\in\lbrack0,T_{0}]\text{, in a compact subset of
}\mathrm{int}U\}
\]
with
\[
\frac{1}{T_{0}}\int_{0}^{T_{0}}f(\omega_{1}(s))ds\leq\frac{1}{T_{0}}\int
_{0}^{T_{0}}f(\omega_{0}(s))ds+\varepsilon.
\]
\textbf{Claim:} For every $T>0$ and every control $\omega\in\mathcal{U}$ there
exists $x_{1}\in\mathbb{R}^{d}$ with $\varphi(T,x_{1},\omega)=x_{1}$.

In fact, hyperbolicity of $A$ implies that the matrix $I-e^{AT}$ is
invertible, and hence there is a unique solution $x(T,\omega)$ of%
\[
\left(  I-e^{AT}\right)  x(T,\omega)=\varphi(T,0,\omega).
\]
Now the variation-of-constants formula shows the claim:%
\[
x(T,\omega)=e^{AT}x(T,\omega)+\varphi(T,0,\omega)=\varphi(T,x(T,\omega
),0)+\varphi(T,0,\omega)=\varphi(T,x(T,\omega),\omega).
\]
Applying this to $T_{0}$ and $\omega_{1}$ we find a point $x_{1}%
:=x(T_{0},\omega_{1})=\varphi(T_{0},x_{1},\omega_{1})$. Since $\omega_{1}%
\in\mathrm{int}\mathcal{U}_{\left\vert [0,T_{0}]\right.  }$it follows that a
neighborhood of $x_{1}$ can be reached in time $T_{0}$ from $x_{1}$. This
follows, since by controllability, the map%
\[
L_{\infty}([0,T_{0}],\mathbb{R}^{m})\rightarrow\mathbb{R}^{d},\omega
\mapsto\varphi(T_{0},0,\omega)
\]
is a linear surjective map, hence maps open sets to open sets, and the same is
true for the map%
\[
\omega\mapsto\varphi(T_{0},x_{1},\omega)=e^{AT_{1}}+\varphi(T_{0},0,\omega).
\]
Analogously, $x_{1}$ can be reached from every point in a neighborhood of
$x_{1}$ in time $T_{0}$. Hence in the intersection of these two neighborhoods
every point can be steered in time $2T_{0}$ into every other point. This shows
that $x_{1}$ is in the interior of the (unique) control set $D$, and the
corresponding trajectory $\varphi(t,x_{1},\omega_{1}),t\in\lbrack0,T_{0}]$,
remains in the interior of $D$. Extending $\omega_{1}(t),t\in\lbrack0,T_{0}]$,
to a $T_{0}$-periodic control $\omega_{2}$ we find that the control-trajectory
pair $(\omega_{2}(\cdot),\varphi(\cdot,x_{1},\omega_{2}))$ is $T_{0}%
$-periodic, the trajectory is contained in $\mathrm{int}D$ and the values
$\omega_{2}(t+T_{0})=\omega_{1}(t),t\in\lbrack0,T_{0}]$, are in a compact
subset of $\mathrm{int}U$. It follows that%
\begin{align*}
\inf_{(T^{\prime},\omega^{\prime})}\frac{1}{T^{\prime}}\int_{0}^{T^{\prime
\prime}}f(\omega^{\prime}(s))ds &  \geq\frac{1}{T_{0}}\int_{0}^{T_{0}}%
f(\omega_{0}(s))ds-\varepsilon\geq\frac{1}{T_{0}}\int_{0}^{T_{0}}f(\omega
_{1}(s))ds-2\varepsilon\\
&  =\frac{1}{T_{0}}\int_{0}^{T_{0}}f(\omega_{2}(s))ds-2\varepsilon\\
&  \geq\inf_{(T,x,\omega)}\frac{1}{T}\int_{0}^{T}f(\omega(s))ds-2\varepsilon,
\end{align*}
where the second infimum is taken over all $(T,x,\omega)\in(0,\infty)\times
D\times\mathcal{U}$ such that the control-trajectory pair $(\omega
(\cdot),\varphi(\cdot,x,\omega))$ is $T$-periodic, the trajectory is contained
in $\mathrm{int}D$ and the values $\omega(t),t\in\lbrack0,T]$, are in a
compact subset of $\mathrm{int}U$.

Together with the inequalities in Lemma \ref{theorem20} this implies%
\begin{align*}
\min_{u\in U}f(u) &  =\inf_{(T^{\prime},\omega^{\prime})}\frac{1}{T^{\prime}%
}\int_{0}^{T^{\prime\prime}}f(\omega^{\prime}(s))ds\leq\inf_{(T^{\prime
},y,\omega^{\prime})}\frac{1}{T^{\prime}}\int_{0}^{T^{\prime}}f(\omega
^{\prime}(s))ds\\
&  \leq P_{inv}(f,K,D)-\sum_{j=1}^{r}d_{j}\max\{0,\operatorname{Re}\lambda
_{j}\}\\
&  \leq\inf_{(T,x,\omega)}\frac{1}{T}\int_{0}^{T}f(\omega(s))ds\\
&  \leq\inf_{(T^{\prime},\omega^{\prime})}\frac{1}{T^{\prime}}\int
_{0}^{T^{\prime\prime}}f(\omega^{\prime}(s))ds+2\varepsilon\\
&  =\min_{u\in U}f(u)+2\varepsilon.
\end{align*}
Since $\varepsilon>0$ is arbitrary, assertion (\ref{conjecture}) follows.
\end{proof}

\begin{remark}
The proof of the \textbf{Claim} above follows arguments in the proof of da
Silva and Kawan \cite[Theorem 20]{daSilK18}.
\end{remark}

\begin{remark}
\label{Remark_old}Theorem \ref{linear_main2} improves \cite[Theorem
6.2]{Cocosa2}, where it had to be assumed that the minimum of $f(u),u\in U$,
is attained in an equilibrium.
\end{remark}

\section{Further applications\label{Section6}}

In this section, we apply Theorem \ref{theo} to linear control systems on Lie
groups and to inner control sets.

\subsection{Control sets and equilibrium pairs}

Given a control system (\ref{2.1}), a pair $(u_{0},x_{0})\in U\times M$ is
called an \textbf{\textit{equilibrium pair}} if $F(x_{0},u_{0})=0$, or
equivalently, $\varphi(t,x_{0},\bar{u}_{0})=x_{0}$ for all $t\in\mathbb{R}$,
where $\bar{u}_{0}(t)\equiv u_{0}$.

If $(u_{0},x_{0})$ is an equilibrium pair, the linearized system is an
autonomous linear control system in $T_{x_{0}}M$ and the Lyapunov exponents at
$(u_{0},x_{0})$ in the direction $\lambda\in T_{x_{0}}M\backslash\{0_{x_{0}%
}\}$ coincide with the real parts of the eigenvalues of $\nabla F_{u_{0}%
}(x_{0}):T_{x_{0}}M\rightarrow T_{x_{0}}M$. Then regularity, i.e.,
controllability of the linearized system, can be checked by Kalman's rank condition.

\begin{corollary}
\label{corol1} Let $D\subset M$ be a control set with nonempty interior and
let $f\in C(U,\mathbb{R})$. Suppose that there is a regular equilibrium pair
$(u_{0},x_{0})\in\mathrm{int}U\times\mathrm{int}D$. Then for every compact set
$K\subset D$ and every set $Q\supset D$ we have
\[
P_{inv}(f,K,Q)\leq\sum_{\lambda\in\sigma(\nabla F_{u_{0}}(x_{0}))}%
\max\{0,n_{\lambda}\emph{Re}(\lambda)\}+f(u_{0}),
\]
where $n_{\lambda}$ is the algebraic multiplicity of the eigenvalue $\lambda$
in the spectrum $\sigma(\nabla F_{u_{0}}(x_{0}))$.
\end{corollary}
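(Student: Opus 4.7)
The plan is simply to apply Theorem \ref{theo} to the special control-trajectory pair given by the equilibrium. Since $F(x_0,u_0)=0$, the trajectory $\varphi(\cdot,x_0,\bar u_0)\equiv x_0$ is constant, so $(\bar u_0(\cdot),\varphi(\cdot,x_0,\bar u_0))$ is $T$-periodic for \emph{every} $T>0$, and by hypothesis it is regular. Moreover $x_0\in\mathrm{int}D$, and the constant control takes values in the compact singleton $\{u_0\}\subset\mathrm{int}U$. Hence this pair is admissible in the infimum appearing in Theorem \ref{theo}.

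Next I would identify the Lyapunov data of this pair. Along the constant trajectory the linearization (\ref{linearization}) reduces to the autonomous linear system
\[
\dot z(t)=\nabla F_{u_0}(x_0)\,z(t)+D_2F(x_0,u_0)\,\mu(t),
\]
whose homogeneous flow is $e^{t\nabla F_{u_0}(x_0)}$. Consequently the distinct Lyapunov exponents $\rho_1,\dots,\rho_r$ at $(x_0,\bar u_0)$ are precisely the distinct values of $\operatorname{Re}\lambda$ for $\lambda\in\sigma(\nabla F_{u_0}(x_0))$, and the Lyapunov space $L_j$ is the direct sum of the generalized real eigenspaces for those $\lambda$ with $\operatorname{Re}\lambda=\rho_j$. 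Therefore $d_j=\sum_{\operatorname{Re}\lambda=\rho_j}n_\lambda$, so
\[
\sum_{j=1}^{r}\max\{0,d_j\rho_j\}=\sum_{\lambda\in\sigma(\nabla F_{u_0}(x_0))}\max\{0,n_\lambda\operatorname{Re}\lambda\}.
\]

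Finally, since the control $\bar u_0$ is constant, $\tfrac{1}{T}\int_0^T f(\bar u_0(s))\,ds=f(u_0)$ independently of $T$. Substituting the chosen pair into the bound of Theorem \ref{theo} yields
\[
P_{inv}(f,K,Q)\leq\sum_{\lambda\in\sigma(\nabla F_{u_0}(x_0))}\max\{0,n_\lambda\operatorname{Re}\lambda\}+f(u_0),
\]
which is the claim. There is essentially no obstacle: the only point one needs to verify carefully is the identification of the Lyapunov spectrum of the constant trajectory with the real spectrum of $\nabla F_{u_0}(x_0)$ together with the correct multiplicities, and this is a standard consequence of the Jordan decomposition of the generator.
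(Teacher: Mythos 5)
Your proposal is correct and follows exactly the paper's route: apply Theorem \ref{theo} to the constant (hence $T$-periodic for all $T$) regular control-trajectory pair at the equilibrium, identify the Lyapunov exponents with the real parts of the eigenvalues of $\nabla F_{u_0}(x_0)$ (a fact the paper records just before the corollary), and note that the integral term collapses to $f(u_0)$. Your justification of the Lyapunov-spectrum identification via the autonomous linearization is a slightly more detailed version of what the paper takes for granted.
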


\begin{proof}
Since $(u_{0},x_{0})$ is a regular equilibrium pair, the control-trajectory
pair $(\varphi(\cdot,x_{0},\bar{u}_{0}),\bar{u}_{0}(\cdot))$ is $T$-periodic
and regular for every $T>0$. By Theorem \ref{theo} we obtain
\begin{align*}
P_{inv}(f,K,Q) &  \leq\inf_{(T,x,\omega)}\left\{  \sum_{j=1}^{r(x,\omega)}%
\max\{0,d_{j}(x,\omega)\rho_{j}(x,\omega)\}\right\}  +\frac{1}{T}\int_{0}%
^{T}f(\omega(s))ds\\
&  \leq\sum_{\lambda\in\sigma(\nabla F_{\omega_{0}}(x_{0}))}\max
\{0,n_{\lambda}\emph{Re}(\lambda)\}+f(u_{0}).
\end{align*}

\end{proof}

\subsection{Control sets of linear control systems on Lie groups}

In this subsection we consider \textbf{\textit{linear control systems on a
connected Lie group }}$G$ introduced in Ayala and San Martin \cite{AySan} and
Ayala and Tirao \cite{AyTio}.

They are given by a family of ordinary differential equations on $G$ of the
form
\begin{equation}
\dot{x}(t)=\mathcal{X}(x(t))+\sum_{j=1}^{m}u_{j}(t)X_{j}(x(t)),\ \omega
=(u_{1},\dotsc,u_{m})\in\mathcal{U},\label{lcs1}%
\end{equation}
where the drift vector field $\mathcal{X}$, called the \textit{\textbf{linear
vector field}}, is an infinitesimal automorphism, i.e., its solutions are a
family of automorphisms of the group, and the $X_{j}$ are right invariant
vector fields. Note that the linear control systems of the form (\ref{lcs2})
are a special case with $G=$ $\mathbb{R}^{d}$.

Their controllability properties have been analyzed in da Silva \cite{daSil16}%
, Ayala, da Silva and Zsigmond \cite{ASZ} and Ayala and da Silva \cite{AydaS}.
In particular, the existence and uniqueness of control sets for general
systems of the form (\ref{lcs1}) has been analyzed in \cite{ASZ}. If $0$ is in
the interior of the control range $U$ and the reachable set $\mathcal{O}%
^{+}(e)$ from the neutral element $e$ is open (this holds e.g. if
$e\in\mathrm{int}\mathcal{O}^{+}(e)$), then there exists a control set $D$
containing $e$ in the interior. For semisimple or nilpotent Lie groups $G$
sufficient conditions for boundedness of $C$ are given in \cite[Theorem
3.9]{ASZ}, and \cite[Corollary 3.12]{ASZ} shows uniqueness of the control set
with nonvoid interior if $G$ is decomposable.

Along with system (\ref{lcs1}) comes an associated derivation $\mathcal{D}$ of
the Lie algebra $\mathfrak{g}$ of $G$ which is given by
\[
\mathcal{D}(Y)=-\mbox{ad}(\mathcal{X})(Y):=[\mathcal{X},Y](e).
\]

\begin{corollary}
\label{corol2}Consider the linear control system (\ref{lcs1}) on a Lie group
$G$. Suppose that $D$ is a control set with $e_{G}\in\mathrm{int}D$ and
compact closure $\overline{D}$ and let $K\subset D\subset Q$. Let $f\in
C(U,\mathbb{R})$ be a potential. If the equilibrium pair $(0,e_{G}%
)\in\mathrm{int}U\times\mathrm{int}D$ is regular, then
\[
P_{inv}(f,K,Q)\leq\sum_{\lambda\in\sigma(\mathcal{D})}\max\{0,n_{\lambda
}\emph{Re}(\lambda)\}+f(u_{0}).
\]
If furthermore $K$ has positive Haar measure and $f(0)=\min_{u\in U}f$, then
\[
P_{inv}(f,K,Q)=P_{out}(f,K,Q)=\sum_{\lambda\in\sigma(\mathcal{D})}%
\max\{0,n_{\lambda}\emph{Re}(\lambda)\}+f(0).
\]

\end{corollary}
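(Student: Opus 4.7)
For the first inequality I would apply Corollary \ref{corol1} to the equilibrium pair $(0, e_G) \in \mathrm{int}\,U \times \mathrm{int}\,D$. For system (\ref{lcs1}) one has $F(x,u) = \mathcal{X}(x) + \sum_{j=1}^m u_j X_j(x)$, so at $u=0$ the Jacobian in $x$ is $D\mathcal{X}(e_G)$; by the very definition of the derivation associated to a linear vector field this coincides with $\mathcal{D}\colon \mathfrak{g} \to \mathfrak{g}$ under the canonical identification $T_{e_G}G \cong \mathfrak{g}$. Hence $\sigma(\nabla F_0(e_G)) = \sigma(\mathcal{D})$ with matching algebraic multiplicities, and Corollary \ref{corol1} produces the stated upper bound (with $u_0 = 0$, so $f(u_0) = f(0)$).

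For the equality, the plan is to bracket $P_{out}(f,K,Q) \le P_{inv}(f,K,Q)$ between matching bounds, the upper one being the inequality just established. For a lower bound on $P_{inv}(f,K,Q)$ I would invoke Theorem \ref{theorem_lower}: $K$ has positive Haar measure by hypothesis and $\overline{D}$ is compact, so $(K, \overline{D})$ has positive finite volume. The assumption $f(0) = \min_{u \in U} f$ forces $\inf_{(x,\omega)} \int_0^\tau f(\omega(s))\,ds \ge \tau f(0)$, so the task reduces to producing a divergence contribution equal to $\sum_\lambda \max\{0, n_\lambda \mathrm{Re}(\lambda)\}$. A direct computation on $G$ with left Haar measure gives only $\mathrm{tr}(\mathcal{D})$, since each right-invariant $X_j$ preserves left Haar measure (its flow is a left translation) while the flow of $\mathcal{X}$ rescales left Haar by the constant factor $e^{t\,\mathrm{tr}(\mathcal{D})}$; thus $\mathrm{div}\,F_u \equiv \mathrm{tr}(\mathcal{D})$ uniformly, which is in general strictly smaller than the sum of positive real parts.

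To close this gap I would, mirroring Step 2 in the proof of Lemma \ref{theorem20}, project onto the unstable factor. Because $\mathcal{D}$ is a derivation of $\mathfrak{g}$, its generalized eigenspaces satisfy $[V_\lambda, V_\mu] \subseteq V_{\lambda+\mu}$, so $\mathfrak{g}^+ := \bigoplus_{\mathrm{Re}\lambda > 0} V_\lambda$ is a Lie subalgebra, and an appropriate homogeneous-space quotient of $G$ carries an induced linear control system with associated derivation $\mathcal{D}|_{\mathfrak{g}^+}$ of trace $\sum_\lambda n_\lambda \max\{0, \mathrm{Re}(\lambda)\}$. Applying Theorem \ref{theorem_lower} to the projected system and combining with the semi-conjugacy inequality $P_{inv}(f,K,Q) \ge P_{inv}(f, \pi K, \pi Q)$ (as in the proof of Lemma \ref{theorem20}) would then deliver the matching lower bound; the passage from $P_{inv}$ to $P_{out}$ and to arbitrary $Q \supseteq D$ is routine via the monotonicity in Proposition \ref{propert}(i). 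The main obstacle is precisely this projection step: unlike on $\mathbb{R}^d$, the complement $\mathfrak{g}^0 \oplus \mathfrak{g}^-$ need not be an ideal, so one cannot simply form a quotient Lie group but must work on a homogeneous-space quotient, invoking the structural results on linear systems on Lie groups from \cite{AySan, AyTio, daSil16, ASZ} and verifying by a Fubini-type argument against the decomposition $\mathfrak{g} = \mathfrak{g}^- \oplus \mathfrak{g}^0 \oplus \mathfrak{g}^+$ that $\pi K$ retains positive finite Haar measure and that the induced system is again of the form (\ref{lcs1}).
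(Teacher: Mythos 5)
Your first part is essentially the paper's argument: apply Corollary \ref{corol1} to the regular equilibrium pair $(0,e_G)$ and identify $\nabla F_0(e_G)$ with $\mathcal{D}$. The paper makes this identification carefully (using the symmetry of the Levi-Civita connection and $\mathcal{X}(e_G)=0$ to show $(\nabla_Y F_0)(e_G)=-[\mathcal{X},Y]=\mathcal{D}(Y)$ for $Y\in\mathfrak{g}$), but your appeal to the definition of the associated derivation is the same idea and is acceptable.

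The second part has a genuine gap, and you have chosen a much harder route than necessary. You propose to re-derive the lower bound from Theorem \ref{theorem_lower} by projecting onto an unstable homogeneous-space quotient, and you yourself flag the key step --- constructing the quotient, showing the induced system is again linear of the form (\ref{lcs1}), and verifying that $\pi K$, $\pi Q$ have positive finite measure and that the divergence of the projected system equals $\sum_\lambda n_\lambda\max\{0,\mathrm{Re}(\lambda)\}$ --- as an unresolved obstacle. That step is precisely the content of da Silva's theorem on outer invariance entropy for linear systems on Lie groups (\cite[Theorem 4.3]{daSil14}), which the paper simply cites: it gives $h_{out}(K,Q)\geq\sum_{\lambda\in\sigma(\mathcal{D})}\max\{0,n_\lambda\operatorname{Re}(\lambda)\}$ directly. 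Once you have that, the passage to pressure is elementary and does not require Theorem \ref{theorem_lower} at all: set $\tilde f:=f-f(0)\geq 0$ (this is exactly where the hypothesis $f(0)=\min_U f$ enters), so by Proposition \ref{propert}(i) applied with $\mathbf{0}\leq\tilde f$ one gets $P_{inv}(\tilde f,K,Q)\geq P_{out}(\tilde f,K,Q)\geq P_{out}(\mathbf{0},K,Q)=h_{out}(K,Q)$, and Proposition \ref{propert}(ii) converts this into the desired lower bound for $P_{inv}(f,K,Q)=P_{inv}(\tilde f,K,Q)+f(0)$; combined with the upper bound from the first part, the chain collapses to equality for both $P_{inv}$ and $P_{out}$. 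As written, your argument does not close, because the homogeneous-space projection is asserted rather than proved; either carry out that construction in full (essentially reproving \cite{daSil14}) or cite the entropy result and use the monotonicity/shift properties of the pressure as above.
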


\begin{proof}
Note that the right hand side of the system is given by $F(x,u)=\mathcal{X}%
(x)+\sum_{i=1}^{m}u_{i}X_{i}(x)$ and hence $F_{0}(x):=F(x,0)=\mathcal{X}(x)$.
Let $g$ the Riemannian metric on $G$ defined as in the proof of Theorem
\ref{theo} and $\nabla$ the Levi-Civita connection. Let $(\phi,U)$ be a local
coordinate neighborhood of $e_{G}$ and pick a left invariant vector field $Y$
in the Lie algebra $\mathfrak{g}$ of $G$. Then we can express $\mathcal{X}$ in
terms of $(\phi,U)$ by
\[
\mathcal{X}(h)=\sum_{i=1}^{d}y_{i}(h)\frac{\partial}{\partial x_{i}}.
\]
Note that since $\mathcal{X}(e_{G})=0$, then $y_{i}(e_{G})=0$ for every
$i\in\{1,\cdots,d\}$, hence
\[
(\nabla_{\mathcal{X}}Y)(e_{G})=\sum_{i=1}^{d}y_{i}(e_{G})\left(  \nabla
_{\frac{\partial}{\partial x_{i}}}Y\right)  (e_{G})=0.
\]
Since $\nabla$ is symmetric, we have
\[
\left(  \nabla_{Y}F_{0}\right)  (e_{G})=\left(  \nabla_{Y}\mathcal{X}\right)
(e_{G})=\left(  \nabla_{\mathcal{X}}Y-[\mathcal{X},Y]\right)  (e_{G}%
)=-[\mathcal{X},Y]=\mathcal{D}(Y).
\]
Since this holds for every $Y\in\mathfrak{g}$, we have $\nabla F_{0}%
(e_{G})=\mathcal{D}$. By Corollary \ref{corol1} we obtain
\[
P_{inv}(f,K,Q)\leq\sum_{\lambda\in\sigma(\mathcal{D})}\max\{0,n_{\lambda
}\operatorname{Re}(\lambda)\}+f(0).
\]
Now, suppose that $K$ has positive Haar measure. By da Silva \cite[Theorem
4.3]{daSil14}, we know that
\[
h_{out}(K,Q)\geq\sum_{\lambda\in\sigma(\mathcal{D})}\max\{0,n_{\lambda
}\operatorname{Re}(\lambda)\}.
\]
Define $\tilde{f}(u)=f(u)-f(u_{0}),u\in U$. Since $\tilde{f}\geq0$ Proposition
\ref{propert}(i) implies that
\[
P_{inv}(\tilde{f},K,Q)\geq P_{out}(\tilde{f},K,Q)\geq h_{out}(K,Q)\geq
\sum_{\lambda\in\sigma(\mathcal{D})}\max\{0,n_{\lambda}\operatorname{Re}%
(\lambda)\}.
\]
Proposition \ref{propert}(ii) implies $P_{inv}(\tilde{f},K,Q)=P_{inv}%
(f,K,Q)-\inf f$, hence this yields
\[
P_{inv}(f,K,Q)=P_{out}(f,K,Q)=\sum_{\lambda\in\sigma(\mathcal{D})}%
\max\{0,n_{\lambda}\operatorname{Re}(\lambda)\}+\inf f.
\]

\end{proof}

\subsection{Inner control sets}

This section presents an application of Theorem \ref{theo} to the class of
\textbf{\textit{inner control sets}} as defined (with small changes) in Kawan
\cite[Definition 2.6]{Kawa13}. This nomenclature refers to a control set
$D\subset M$ for which there exists an decreasing family of compact and convex
sets $\{U_{\rho}\}_{\rho\in\lbrack0,1]}$ in $\mathbb{R}^{m}$ (i.e.,
$U_{\rho_{2}}\subset U_{\rho_{1}}$ for $\rho_{1}<\rho_{2}$), such that for
every $\rho\in\lbrack0,1]$ system (\ref{2.1})$_{\rho}$ with control range
$U_{\rho}$ (instead of $U$ in (\ref{2.1})) has a control set $D_{\rho}$ with
nonvoid interior and compact closure, and the following conditions are satisfied:

(i) $U=U_{0}$ and $D=D_{1}$;

(ii) $\overline{D_{\rho_{2}}}\subset\mathrm{int}D_{\rho_{1}}$ whenever
$\rho_{1}<\rho_{2}$;

(iii) for every neighborhood $W$ of $\overline{D}$ there is $\rho\in
\lbrack0,1)$ with $\overline{D_{\rho}}\subset W$.

\begin{corollary}
Consider an inner control set $D$ of control system (\ref{2.1}). Let
$(\omega_{0}(\cdot),\varphi(\cdot,x_{0},\omega_{0}))$ be a regular
$T$-periodic control-trajectory pair with $x_{0}\in\overline{D}$ and
$\omega_{0}\in\mathcal{U}_{1}$. Then
\[
P_{out}(f,\overline{D})\leq\sum_{j=1}^{r}\max\{0,d_{j}\operatorname{Re}%
\lambda_{j}\}+\frac{1}{T}\int_{0}^{T}f(\omega_{0}(s))ds,
\]
holds, where $\lambda_{1},\dotsc,\lambda_{r}$ are the Lyapunov exponents at
$(x_{0},\omega_{0})$ with corresponding multiplicities $d_{1},\dotsc,d_{r}$.
\end{corollary}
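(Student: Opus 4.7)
The plan is to use the nested family $\{D_\rho, U_\rho\}$ from the inner control set definition to reduce the statement to a direct application of Theorem~\ref{theo}. The key idea is that while the given periodic trajectory may not stay inside $\overline{D}$, it will stay inside a slightly larger control set $D_\rho$ associated with a slightly more restrictive control range $U_\rho$, and the outer invariance pressure is defined precisely by such neighborhood approximations.

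Fix $\varepsilon>0$. By property (iii) of inner control sets, choose $\rho\in[0,1)$ with $\overline{D_\rho}\subset N_\varepsilon(\overline{D})$. Consider the restricted control system (\ref{2.1})$_\rho$ on $M$ with control range $U_\rho$; its unique control set with nonempty interior and compact closure is $D_\rho$. Property (ii) (with $\rho_1=\rho$, $\rho_2=1$) gives $K:=\overline{D}\subset\mathrm{int}D_\rho$, so $K$ is a compact subset of $D_\rho$; set $Q:=\overline{D_\rho}\supset D_\rho$, yielding an admissible pair for (\ref{2.1})$_\rho$. The given pair $(\omega_0,\varphi(\cdot,x_0,\omega_0))$ is $T$-periodic and regular for (\ref{2.1})$_\rho$ as well, since regularity is a property of the linearization only (which does not depend on the constraint set). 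Moreover, $x_0\in\overline{D}\subset\mathrm{int}D_\rho$. Assuming $U_1$ sits in a compact subset of $\mathrm{int}U_\rho$ (this is the point where the precise form of the inner-control-set definition is used: the strictly decreasing, convex nested family $\{U_\rho\}$ forces $U_1$ to be compactly contained in the interior of $U_\rho$ for every $\rho<1$), Theorem~\ref{theo} applied to (\ref{2.1})$_\rho$ at the single triple $(T,x_0,\omega_0)$ yields
\[
P_{inv}^{\,\rho}(f,\overline{D},\overline{D_\rho}) \;\leq\; \sum_{j=1}^{r}\max\{0,d_j\operatorname{Re}\lambda_j\} + \frac{1}{T}\int_0^T f(\omega_0(s))\,ds.
\]

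Next I relate the pressure of (\ref{2.1})$_\rho$ back to that of (\ref{2.1}). Since every admissible control for (\ref{2.1})$_\rho$ is admissible for (\ref{2.1}), any $(\tau,K,Q)$-spanning set for (\ref{2.1})$_\rho$ is also a $(\tau,K,Q)$-spanning set for (\ref{2.1}); this gives $a_\tau(f,K,Q)\leq a_\tau^{\rho}(f,K,Q)$ and hence $P_{inv}(f,\overline{D},\overline{D_\rho})\leq P_{inv}^{\,\rho}(f,\overline{D},\overline{D_\rho})$. Combining this with monotonicity in the second argument, and the inclusion $\overline{D_\rho}\subset N_\varepsilon(\overline{D})$, gives
\[
P_{inv}(f,\overline{D},N_\varepsilon(\overline{D}))\;\leq\; P_{inv}(f,\overline{D},\overline{D_\rho})\;\leq\; \sum_{j=1}^{r}\max\{0,d_j\operatorname{Re}\lambda_j\} + \frac{1}{T}\int_0^T f(\omega_0(s))\,ds.
\]
Letting $\varepsilon\to 0$ (and correspondingly $\rho\to 1^-$) in the definition of $P_{out}$ produces the claimed bound.

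The main obstacle is the verification that the given control $\omega_0\in\mathcal{U}_1$ actually takes values in a compact subset of $\mathrm{int}U_\rho$ for some (indeed any) $\rho<1$; only with this can Theorem~\ref{theo} be invoked for the restricted system. Once this is secured from the inner-control-set definition, the rest is a routine composition of the three monotonicity properties of $P_{inv}$ (in the control range of the system, in the target set $Q$, and the passage to the outer limit).
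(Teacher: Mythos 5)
Your proof is correct and follows essentially the same route as the paper: restrict to the system with control range $U_\rho$, apply Theorem~\ref{theo} to the pair $(\overline{D},\overline{D_\rho})$, use monotonicity in the control range and in $Q$ together with $\overline{D_\rho}\subset N_\varepsilon(\overline{D})$, and let $\varepsilon\to0$. The hypothesis you flag --- that $\omega_0\in\mathcal{U}_1$ takes values in a compact subset of $\mathrm{int}\,U_\rho$ --- is likewise used implicitly (and not verified) in the paper's own proof, so it is not a gap relative to the paper, though your explicit attention to it (and to $x_0\in\overline{D}\subset\mathrm{int}\,D_\rho$) makes the argument tighter.
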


\begin{proof}
Note that the definition of inner control sets implies that for every $\rho
\in\lbrack0,1)$ the set $\overline{D}$ is a compact subset of $D_{\rho}$. By
Theorem \ref{theo} it follows that the outer invariance pressure
$P_{out}^{\rho}(f,\overline{D},\overline{D_{\rho}})$ for system (\ref{2.1}%
)$_{\rho}$ satisfies%
\[
P_{out}^{\rho}(f,\overline{D},\overline{D_{\rho}})\leq\sum_{j=1}^{r}%
\max\{0,d_{j}\rho_{j}\}+\frac{1}{T}\int_{0}^{T}f(\omega_{0}(s))ds\ \text{for
all }\rho\in\lbrack0,1).
\]
Now for given $\varepsilon>0$ we may choose $\rho\in\lbrack0,1)$ such that
$\overline{D_{\rho}}\subset N_{\varepsilon}(\overline{D})$. Then
\begin{align*}
P_{out}(f,\overline{D},N_{\varepsilon}(\overline{D})) &  \leq P_{out}^{\rho
}(f,\overline{D},N_{\varepsilon}(\overline{D}))\\
&  \leq P_{out}^{\rho}(f,\overline{D_{\rho}},N_{\varepsilon}(\overline{D}))\\
&  \leq\sum_{j=1}^{r}\max\{0,d_{j}\rho_{j}\}+\frac{1}{T}\int_{0}^{T}%
f(\omega_{0}(s))ds.
\end{align*}
The first two inequalities follow from $U_{\rho}\subset U_{0}$ and
$\overline{D_{\rho}}\subset N_{\varepsilon}(\overline{D})$. Since
$P_{out}(f,\overline{D})=\lim_{\varepsilon\rightarrow0}P_{out}(f,\overline
{D},N_{\varepsilon}(\overline{D}))$, the assertion follows.
\end{proof}

\subsection{Example}

Consider the following linear control system in $\mathbb{R}^{d}$,
\[
\left[
\begin{array}
[c]{c}%
\dot{x}\\
\dot{y}%
\end{array}
\right]  =\underbrace{\left[
\begin{array}
[c]{cc}%
1 & -1\\
1 & 1
\end{array}
\right]  }_{=:A}\left[
\begin{array}
[c]{c}%
x\\
y
\end{array}
\right]  +\underbrace{\left[
\begin{array}
[c]{c}%
0\\
1
\end{array}
\right]  }_{=:B}\omega(t)
\]
and assume that $\omega(t)\in U:=[-1,1]+u_{0}$ for some $u_{0}\in(-1,1)$. In
this case, $0\in\mathrm{int}U$ and $(A,B)$ is controllable, and $A$ is
hyperbolic with eigenvalues given by $\lambda_{\pm}=1\pm i$. There exists a
unique control set $D\subset\mathbb{R}^{2}$ such that $(0,0)\in\mathrm{int}D$,
and $\overline{D}$ is compact.

We may interpret the control functions $\omega(t)$ and also $u_{0}$ as
external forces acting on the system. Take $f\in C(U,\mathbb{R})$ as
$f(u):=|u-u_{0}|$, then $(S_{\tau}f)(\omega)$ represents the impulse of
$\omega-u_{0}$ until time $\tau$. For a subset $K\subset D$ a $(\tau
,K,D)$-spanning set $\mathcal{S}$ represents a set of external forces $\omega$
that cause the system to remain in $D$ when it starts in $K$. By Theorem
\ref{linear_main2} we obtain for a compact subset $K\subset D$ with nonzero
Lebesgue measure that
\[
P_{inv}(f,K,Q)=2+\min_{u\in U}f(u)=2+\min_{u\in\lbrack-1,1]+u_{0}}\left\vert
u-u_{0}\right\vert .
\]
Here $P_{inv}(f,K,Q)$ represent the exponential growth rate of the amount of
total impulse required of the external forces $\omega-u_{0}$ acting on the
system to remain in $D$ as time tends to infinity. The minimum of $f$ is
attained in $u=u_{0}$, which does not correspond to an equilibrium if
$u_{0}\not =0$. Hence \cite[Theorem 6.2]{Cocosa2} (cf. Remark \ref{Remark_old}%
) could not be applied in this case.

\end{document}